\documentclass[11pt]{article}
\usepackage{amsmath,amsthm,amsfonts,amssymb,mathrsfs,bm}
\usepackage{amsmath,amsthm,amsfonts,amssymb,bm,wasysym}
\usepackage{epsfig}
\usepackage[usenames]{color}
\usepackage{verbatim}
\usepackage{hyperref}
\usepackage{multicol}
\usepackage{comment}
\usepackage{float}
\usepackage{color}
\usepackage{enumerate}
\usepackage[normalem]{ulem}

\usepackage[color=green, textsize=tiny]{todonotes}

\topmargin 0in
\oddsidemargin .01in
\textwidth 6.5in
\textheight 9in
\evensidemargin 1in
\addtolength{\voffset}{-.6in}
\addtolength{\textheight}{0.22in}
\parskip \medskipamount
\parindent      0pt



\numberwithin{equation}{section}


\newcommand{\bca}{ {\rm BCap}}

\newtheorem{dfn}{Definition}[section]
\newtheorem{thm}[dfn]{Theorem}
\newtheorem{lem}[dfn]{Lemma}
\newtheorem{cor}[dfn]{Corollary}
\newtheorem{rem}[dfn]{Remark}

\newtheorem{prop}[dfn]{Proposition}\makeatletter

\title{Branching capacity of a random walk range}

\author{Bruno Schapira\thanks{Aix-Marseille Universit\'e, CNRS, I2M, UMR 7373, 13453 Marseille, France;  bruno.schapira@univ-amu.fr}}

\begin{document}

\date{}
\maketitle

\begin{abstract}
We consider the branching capacity of the range of a simple random walk on $\mathbb Z^d$, with $d\ge 5$, and show that it falls in the same universality class as the volume and the capacity of the range of simple random walks and branching random walks. To be more precise we prove a law of large numbers in dimension $d\ge 6$, with a logarithmic correction in dimension $6$, and identify the correct order of growth in dimension $5$. The main original part is the law of large numbers in dimension $6$, 
for which one needs a precise asymptotic of the non-intersection probability of an infinite invariant critical tree-indexed walk with a two-sided simple random walk. The result is analogous to the estimate proved by Lawler for the non-intersection probability of an infinite random walk with a two-sided walk in dimension four. While the general strategy of Lawler's proof still applies in this new setting, many steps require new ingredients. 
\newline
\newline
\emph{Keywords and phrases.} Random walk range, tree-indexed random walk, branching capacity, law of large numbers.
\newline
MSC 2020 \emph{subject classifications.} 60F15; 60J80.
\end{abstract}
 
\tableofcontents

\section{Introduction} 
We start by recalling some important definitions and we will then state our main results. 
The branching capacity is defined here in terms of an offspring distribution $\mu$ on $\mathbb N$, which is fixed in the whole paper and assumed to be critical, in the sense that 
$\sum_ i i\mu(i)=1$. We further assume that it has a finite and positive variance $\sigma^2$. 
We write the size biased distribution of $\mu$ as $\mu_{{\rm sb}}$, which we recall is defined by $\mu_{{\rm sb}}(i)=i\mu(i)$, for all $i\ge 0$. 

We then consider $\mathcal T$ an infinite spatial tree constructed as follows: 
\begin{itemize}
	\item The root produces $i$ offspring with probability $\mu(i-1)$ for every $i\ge 1$. The first offspring of the root is \emph{special}, while the others if they exist are \emph{normal}. 
	\item Special vertices produce offspring independently according to $\mu_{{\rm sb}}$, while normal vertices produce offspring independently according to $\mu$. 
	\item One of the offspring of a special vertex is chosen at random to be a special vertex, while the rest are normal ones. 
\end{itemize}
By construction $\mathcal T$ has a unique infinite path emanating from the root that we call \emph{spine}. We assign label $0$ to the root. We assign positive labels to the vertices to the right of the spine according to depth first search from the root and we assign negative labels to the vertices to the left of the spine and the spine vertices as well according to depth first search from infinity. We call the vertices with negative labels (including the spine vertices) the \emph{past} of $\mathcal T$ and denote them $\mathcal T_-$, while the vertices with non-negative labels are in the 
\emph{future} of $\mathcal T$ and we denote them $\mathcal T_+$. Note that the root does not have any offspring in the past of $\mathcal T$.

Given $x\in \mathbb Z^d$, we denote by $(S_u^x)_{u\in \mathcal T}$ the random walk indexed by $\mathcal T$, starting from $x$, whose jump distribution is the uniform measure on the neighbors of the origin, and denote its range in the past by 
$$\mathcal T^x_- = \{S_u^x : u\in \mathcal T_-\}.$$  
The equilibrium measure $e_A$ of a finite set $A\subset \mathbb Z^d$, with $d\ge 5$, has been introduced by Zhu~\cite{Zhu16}, and is defined by,  
$$e_A(x) = \mathbf 1\{x\in A\} \cdot \mathbb P(\mathcal T^x_- \cap A = \emptyset ). $$ 
Then the branching capacity of a finite set $A$ is defined similarly as the usual Newtonian capacity, namely  
$$\textrm{BCap}(A) = \sum_{x\in A} e_A(x). $$ 

Consider now $(X_n)_{n\ge 0}$ an independent simple random walk on $\mathbb Z^d$ (i.e. a random walk whose law of increments is the uniform measure on the neighbors of the origin), and define its range at time $n$ as 
$$\mathcal R_n=\{X_0,\dots, X_n\}.$$ 
Our main object of study in this paper is the branching capacity of the range $\bca(\mathcal R_n)$, in dimension $d\ge 5$, and our goal is to show that it satisfies the same universal asymptotic behavior as the volume~\cite{DE51} and the capacity~\cite{ASS18,ASS19,Chang17,JO69} of the range, with only a shift of the critical dimension of respectively two and four units, which is here the dimension $6$. Interestingly, the same universal results have also been proved recently for 
the volume~\cite{LGL15,LGL16} and the capacity~\cite{BW20,BH22} of a critical branching random walk, and of course it would be of interest to see if they can as well be extended to the branching capacity of a branching random walk, but we leave this for a future work.  

\vspace{0.2cm}
Our first result is a strong law of large numbers. The proof is entirely similar to the one for the usual Newtonian capacity, which dates back to Jain and Orey~\cite{JO69}, and is reproduced at the end of this paper for reader's convenience (to be more precise the fact that the limiting constant is positive requires a specific argument). 

\begin{thm}\label{m7}
Assume $d \ge 7$. There exists a constant $c_d>0$, such that almost surely, 
\begin{equation}\label{LLN}
\lim_{n\to \infty} \frac{\bca(\mathcal R_n)}{n}  = c_d.  
\end{equation}
\end{thm}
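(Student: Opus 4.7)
I would follow the classical Jain--Orey scheme: apply Kingman's subadditive ergodic theorem to the array $Y_{m,n}:=\bca(\{X_m,\dots,X_n\})$ to obtain the almost sure limit, and then argue separately that the limiting constant is strictly positive.

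\textbf{Subadditivity and Kingman.} First I would verify that $\bca$ is subadditive, $\bca(A\cup B)\le \bca(A)+\bca(B)$, which follows directly from the definition $\bca(A)=\sum_{x\in A}\mathbb P(\mathcal T_-^x\cap A=\emptyset)$ by splitting the sum as $\sum_{x\in A}+\sum_{x\in B\setminus A}$ and using monotonicity of the non-intersection event in the set argument. Combined with translation invariance of $\bca$ and the i.i.d.\ increments of $(X_k)$, this makes $(Y_{k,k+n})_{k\ge 0}$ stationary for every $n$, while the trivial bound $Y_{m,n}\le n-m+1$ gives integrability. Kingman's theorem then yields $\bca(\mathcal R_n)/n\to c_d$ almost surely and in $L^1$, with $c_d=\inf_n \mathbb E[\bca(\mathcal R_n)]/n\in[0,1]$.

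\textbf{Positivity of $c_d$.} The non-trivial step is $c_d>0$. I would decompose
\[
\mathbb E[\bca(\mathcal R_n)]=\sum_{i=0}^n \mathbb P\bigl(X_j\ne X_i\ \forall j<i,\ \mathcal T^{X_i}_-\cap \mathcal R_n=\emptyset\bigr),
\]
apply the Markov property of the walk at some $i\in[n/3,2n/3]$, and use the independence of $\mathcal T$ from $(X_k)$ together with translation invariance to lower bound $c_d$ by the product of (i) the asymptotic fresh-point probability of a two-sided walk at $0$, which is positive by transience, and (ii) the non-intersection probability
\[
\mathbb P\bigl(\mathcal T_-^0\cap\widetilde{\mathcal R}_\infty=\emptyset\bigr),
\]
where $\widetilde{\mathcal R}_\infty$ is the range of an independent two-sided simple random walk started at $0$. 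A first-moment calculation based on the Green kernels (of order $|x|^{-(d-4)}$ for $\mathcal T_-^0$ and $|x|^{-(d-2)}$ for the walk) shows that the expected intersection size is finite precisely when $d\ge 7$, which is our hypothesis.

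\textbf{Main obstacle.} The delicate point is upgrading ``finite expected intersection'' to strictly positive non-intersection probability, since Markov's inequality alone only bounds $\mathbb P(|\cdot|\ge 1)$ from above. I would handle this by separating scales: first use Markov to make the expected intersection outside a large ball $B(0,R)$ smaller than $1/2$, then treat the intersection inside $B(0,R)$ by conditioning on a positive-probability event forcing both the spine of $\mathcal T$ and the walk to exit $B(0,R)$ into well-separated regions. Independence of the subtrees hanging off the spine and of the walk's excursions after hitting the boundary of $B(0,R)$ then yield a uniform positive lower bound. A much sharper version of this argument, one that tracks logarithmic corrections carefully, is what is needed in the critical dimension $d=6$, and constitutes the technical core of the rest of the paper.
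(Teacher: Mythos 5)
Your proposal is correct in outline, but it handles the two halves of the theorem differently from the paper, most notably the positivity of $c_d$. For the existence of the limit, you invoke Kingman's subadditive ergodic theorem on $Y_{m,n}=\bca(\{X_m,\dots,X_n\})$, which works (your direct verification of $\bca(A\cup B)\le\bca(A)+\bca(B)$ from the definition is fine), whereas the paper splits the argument: the ergodic theorem applied to $\tfrac1n\sum_k e_{\mathcal R_\infty\cup\widetilde{\mathcal R}_\infty}(X_k)\mathbf 1\{X_k\notin\mathcal R[k+1,\infty)\}$ gives the lower bound and, at the same time, identifies the constant as $c_d=\mathbb E[\mathbf 1_{\mathcal A_\infty}\cdot e_\infty]$, while subadditivity plus Kolmogorov's law gives the matching upper bound. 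This identification is what the paper's positivity argument then exploits: by the second statement of Corollary~\ref{cor.Lawler}, $\mathbb E[\mathbf 1_{\mathcal A_\infty}\cdot e_\infty\cdot(U_\infty-Z_\infty)]=1$, and since $\mathbb E[U_\infty]<\infty$ for $d\ge 7$ the factor $U_\infty-Z_\infty$ is a.s.\ finite, so $\mathbf 1_{\mathcal A_\infty}\cdot e_\infty$ cannot vanish a.s.; hence $c_d>0$ with no further construction. Your route to positivity — finite first moment of the intersection for $d\ge7$, then a scale-separation/conditioning argument to upgrade this to a positive non-intersection probability — is the classical alternative and can be made to work, but it is where all the real labour sits, and your sketch has two soft spots you would need to address: (i) the lower bound for $c_d$ is not literally a product of the fresh-point probability and the avoidance probability, since both events involve the same walk range (you should instead bound the single joint probability $\mathbb P(\mathcal A_\infty,\,e_\infty=1)$ from below); and (ii) in the conditioning step inside $B(0,R)$ you must also control the critical subtrees hanging off the spine at vertices inside the ball (e.g.\ force them to be singletons, which has positive probability since $\mu(0)>0$), and then verify that the first-moment bound outside $B(0,R)$ still applies to the conditioned continuations, using that $\sum_x G(x-u)g(x-v)\lesssim \|u-v\|^{-(d-6)}$ is small for well-separated exit points. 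In short, the paper buys positivity essentially for free from Lawler's identity, which it needs anyway for $d=6$; your argument is more self-contained but requires carrying out the construction you only sketch.
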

It is very likely that a central limit theorem, with the usual renormalization in $\sqrt n$, could be proved in dimension $d\ge 8$, following the same lines as in~\cite{ASS18}. 
In dimension $7$ it is expected that a logarithmic correction should appear in the normalization, 
but this might be a much more challenging problem, as the corresponding results in the simpler cases of the volume and the capacity of the range of a random walk are already quite involved, see~\cite{JP71,S20} respectively.

The main contribution of this paper is the law of large numbers in dimension $6$, which requires some more original work. We only present here a detailed proof of the weak law (with a convergence in probability), but a strong law (with an almost sure convergence) could be proved as well without much additional work, see Remark~\ref{rem.stronglaw} for more details. The main step is to obtain the asymptotic of 
the expected branching capacity of the range. The general strategy for this is the same as for the capacity of the range, in which case the corresponding result follows from the estimates proved by Lawler~\cite{L91} for the non-intersection probability between one walk and another independent two-sided walk in dimension four, see~\cite{ASS18,Chang17}. However, 
one serious issue that arises when working with the tree-indexed walk is the lack of Markov property, which in particular 
has for damaging consequence that there is no simple last exit formula as one has for a simple random walk.  
This leads to some non-trivial complications, which fortunately can be overtaken.

\begin{thm}\label{m6}
Assume $d= 6$, and that $\mu$ has a finite third moment. Then one has the convergence in probability and in $L^2$,   
\begin{align*}
\lim_{n\to \infty} \frac{\log n}{n} \cdot \bca (\mathcal R_n) = \frac{2\pi^3}{27\sigma^2}.  
\end{align*}
\end{thm}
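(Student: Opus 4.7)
My plan is the standard two-moment method: write $Z_n := \mathrm{BCap}(\mathcal R_n)$ and prove separately that
\begin{equation*}
\mathbb E[Z_n] \sim \frac{2\pi^3}{27\sigma^2}\cdot \frac{n}{\log n} \qquad \text{and}\qquad \mathrm{Var}(Z_n) = o\!\left(\frac{n^2}{(\log n)^2}\right),
\end{equation*}
which together give convergence in $L^2$ (and hence in probability).

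For the first moment, I would start from the definition and take the inner expectation over the tree-indexed walk:
\begin{equation*}
\mathbb E[Z_n] \;=\; \sum_{k=0}^n \mathbb P\bigl(\mathcal T^{X_k}_- \cap \mathcal R_n = \emptyset\bigr).
\end{equation*}
Conditioning on $X_k$ and using the Markov property at time $k$ decomposes $\mathcal R_n - X_k$ as $\mathcal R^{(1)}_k \cup \mathcal R^{(2)}_{n-k}$, the union of two independent simple random walk ranges of lengths $k$ and $n-k$ issued from the origin. Concatenating $\mathcal R^{(1)}$ (time-reversed) and $\mathcal R^{(2)}$ turns this into a two-sided simple random walk indexed by $\{-k,\ldots,n-k\}$, independent of $\mathcal T_-$. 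Thus every summand is a non-intersection probability between the past of an invariant critical tree-indexed walk and a two-sided simple random walk. The central technical input, which I would invoke as the analogue of Lawler's $d=4$ estimate, is
\begin{equation*}
\mathbb P\bigl(\mathcal T_- \cap \{X_{-k},\ldots,X_{n-k}\} = \emptyset\bigr) \;\sim\; \frac{c^*}{\log n}
\end{equation*}
uniformly for $k$ in the bulk of $[0,n]$, where $c^* = \tfrac{2\pi^3}{27\sigma^2}$. Summing over $k$ and handling the boundary terms $k = O(1)$ or $n - k = O(1)$ by a crude bound (they are $O(n/(\log n)^2)$ total) yields the first moment asymptotic. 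The explicit constant $c^*$ comes from (a) the branching factor $\sigma^2$ in the Green's function of the tree-indexed walk, and (b) the Green's function asymptotics of simple random walk in $d=6$, following Lawler's identification of the analogous constant in $d=4$.

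For the second moment I would write $\mathbb E[Z_n^2]$ as a double sum over pairs $(j,k)$ with two independent copies $\mathcal T^{(1)}, \mathcal T^{(2)}$ of the tree-indexed walk, decompose according to $j \le k$, and split the random walk trajectory into the pieces $[0,j]$, $[j,k]$, $[k,n]$. Conditional on the middle piece being short compared to the outer ones, the two non-intersection events decouple and are well approximated by products of single non-intersection probabilities (each contributing $\sim c^*/\log n$). The usual Jain--Orey/Cauchy--Schwarz bookkeeping, together with the first-moment bound applied to sub-ranges, should produce $\mathbb E[Z_n^2] \le (1 + o(1))\mathbb E[Z_n]^2$, giving the desired variance estimate. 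The finite third moment assumption on $\mu$ enters through tail bounds for the size of finite BGW trees attached to the spine, which are needed to control tree-indexed walk excursions in these decoupling arguments.

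The main obstacle is proving the Lawler-type non-intersection asymptotic for $\mathcal T_-$ versus a two-sided walk. Lawler's strategy for two independent walks in $d=4$ iterates a one-step renormalization using last-exit decompositions, which rely crucially on the Markov property at the last visit to a ball. For $\mathcal T_-$ there is no such property, since removing a finite portion of the tree leaves a random number of ``dangling'' subtrees with nontrivial dependencies. To overcome this I would work with the spine decomposition of $\mathcal T_-$: along the infinite spine each vertex produces, independently, a size-biased number of normal offspring that each root an independent finite BGW tree-indexed walk. This gives an i.i.d.\ structure \emph{along the spine} that plays the role of the Markov property. I would then set up an inductive scheme on dyadic scales $R_j = 2^j$, expressing the non-intersection probability at scale $R_{j+1}$ in terms of that at scale $R_j$ by conditioning on the first spine vertex exiting $B(0,R_j)$, controlling the contribution of the attached finite subtrees via second-moment estimates on their range, and absorbing error terms through Harnack-type equidistribution estimates for where $\mathcal T_-$ exits large balls. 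Making each of these steps quantitative enough to identify the exact constant $c^*$, rather than just its order, is where the serious new work lies.
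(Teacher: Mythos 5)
There is a genuine gap at the heart of the proposal: you correctly reduce the theorem to the sharp asymptotic $\mathbb P(\mathcal T_-\cap\{X_{-k},\dots,X_{n-k}\}=\emptyset)\sim c^*/\log n$, but the route you sketch for proving it (a dyadic-scale induction conditioning on spine exit times from $B(0,2^j)$) is not capable of identifying the constant $c^*$, and you concede as much in your last sentence. That is precisely the "serious new work," and it is not where Lawler's argument lives either: in $d=4$ Lawler does not iterate a last-exit renormalization to get the constant; he proves an \emph{exact identity} of the form $\mathbb E[\mathbf 1_{\mathcal A_n}\, e_n\, J_n]=1$, where $J_n$ is a sum of Green's function values along the two-sided walk, and then shows that $J_n$ concentrates around its mean $\sim c\log n$ so that the identity can be factorized. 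The paper's contribution is the branching analogue of this: a last-passage formula for the non-Markovian tree-indexed walk (coming from the shift invariance of the infinite invariant tree) yields $\mathbb E[\mathbf 1_{\mathcal A_n}\, e_n\, \mathcal L_n]=1$ with $\mathcal L_n$ the occupation time of the two-sided range by the future of the tree; conditioning on the spine and its offspring numbers $d_i$ replaces $\mathcal L_n$ by $U_n=\sum_j\sum_i d_i\, g(X_j,\widetilde X_i)$, whose mean is $\tfrac{27\sigma^2}{2\pi^3}\log n$ and whose variance is $\mathcal O(\log n)$ (this is where the third moment of $\mu$ is used, to give $d_i$ a second moment --- not for tail bounds on subtree sizes). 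The constant then falls out as $1/\mathbb E[U_n]$ after a decoupling step showing $\mathbb E[\mathbf 1_{\mathcal A_n}e_nU_n]\approx\mathbb E[\mathbf 1_{\mathcal A_n}e_n]\,\mathbb E[U_n]$. Without this identity (or an equivalent mechanism producing the exact constant), your outline does not close.

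A secondary but real error: your starting identity
$\mathbb E[Z_n]=\sum_{k=0}^n\mathbb P(\mathcal T^{X_k}_-\cap\mathcal R_n=\emptyset)$
is false as stated, because the sum over times counts each point of $\mathcal R_n$ once per visit. Since the walk in $d=6$ is transient with $g(0,0)>1$, this inflates the answer by a nontrivial constant factor. The correct decomposition carries a last-visit indicator, which is exactly the factor $e_n=\mathbf 1\{0\notin\mathcal R[1,\xi_n^r]\}$ appearing throughout the paper; it must be kept inside the non-intersection probability, and the two-moment computation has to be done for the joint event. Your second-moment outline is plausible in spirit (the paper instead expresses $\bca(\mathcal R_n)$ via the hitting probability from a distant point $z$ normalized by $G(z)$, then uses the Markov property of $X$ and the asymptotic uniformity of the first hitting time), but it cannot be assessed until the first-moment machinery is in place.
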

Of course a natural question now would be to prove a central limit theorem, as it was done in~\cite{LG86,ASS19} respectively for the volume and the 
capacity of the range. We leave this for future work, as it would require some really new ingredients, in particular one major issue would be to identify a simple expression for the term 
$$\chi(A,B) = \bca(A\cup B) - \bca(A) - \bca(B),$$
where $A$ and $B$ are arbitrary finite subsets of $\mathbb Z^d$, and improve the bounds that we have on the variance of $\bca (\mathcal R_n)$. 

\vspace{0.2cm}
To conclude we provide bounds identifying the correct order of growth of the expected branching capacity of the range in dimension five. The upper bound is easily obtained 
by using monotonicity of the branching capacity, and known bounds on the branching capacity of balls. The lower bound is more difficult, and we rely here on a recent result  of~\cite{ASS23} showing a variational characterization of the branching capacity. 
\begin{prop}\label{m5}
For $d=5$, there exist positive constants $c_5^-$ and $c_5^+$, such that for all $n\ge 1$, 
\begin{align*}
 c_5^-\cdot \sqrt n \le \mathbb E\left[\bca (\mathcal R_n)\right]\le c_5^+\cdot \sqrt n. 
\end{align*}
\end{prop}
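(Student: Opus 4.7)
The plan is to treat the upper and lower bounds separately. For the upper bound, I would use the monotonicity of $\bca$ together with the trivial inclusion $\mathcal{R}_n \subseteq B(0, M_n)$, where $M_n := \max_{0 \le k \le n}|X_k|$. Zhu's estimate~\cite{Zhu16} $\bca(B(0, R)) \lesssim R^{d-4}$ reads $\bca(B(0, R)) \lesssim R$ in dimension five, and combining with the standard $L^1$ bound $\mathbb{E}[M_n] \lesssim \sqrt n$ (a consequence of Doob's maximal inequality applied coordinate-wise to the simple random walk) gives $\mathbb{E}[\bca(\mathcal{R}_n)] \lesssim \sqrt n$.

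For the lower bound I would appeal to the variational characterization of branching capacity proved in~\cite{ASS23}. In analogy with the classical Dirichlet/Thomson principle for Newtonian capacity, it should supply, for every nonempty finite $A \subset \mathbb{Z}^d$, an estimate of the form
\[
\bca(A) \;\ge\; \frac{|A|^2}{\sum_{x, y \in A} g(x, y)},
\]
where $g$ is the appropriate branching Green function, satisfying the ``shift-by-two'' asymptotic $g(x, y) \asymp |x - y|^{-(d-4)}$ in $d \ge 5$. Applying this to $A = \mathcal{R}_n$, and using the pathwise domination $\sum_{x, y \in \mathcal{R}_n} g(x, y) \le \sum_{0 \le i, j \le n} g(X_i, X_j)$, one gets $\bca(\mathcal{R}_n) \ge |\mathcal{R}_n|^2 / \sum_{i, j \le n} g(X_i, X_j)$. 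The Cauchy--Schwarz inequality $\mathbb{E}[U^2/V] \ge (\mathbb{E} U)^2/\mathbb{E} V$ then reduces the task to two first-moment computations: $\mathbb{E}[|\mathcal{R}_n|] \gtrsim n$ (Dvoretzky--Erd\H{o}s, valid in $d \ge 3$), and
\[
\mathbb{E}\Bigl[\sum_{0 \le i, j \le n} g(X_i, X_j)\Bigr] \;\lesssim\; n^{3/2},
\]
the latter following from translation invariance together with the local CLT bound $\mathbb{E}\bigl[|X_k|^{-1}\bigr] \lesssim (k \vee 1)^{-1/2}$ valid in $d = 5$, since $\sum_{0 \le i \le j \le n}(j-i+1)^{-1/2} \asymp n^{3/2}$.

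The main obstacle I anticipate is the invocation of the variational characterization in the precise form needed here. Because the tree-indexed walk is not Markovian, the Green function entering the variational formula is more subtle than in the Newtonian case (it may be a ``past'', ``full-tree'', or symmetrized version), and one must verify that it indeed obeys the expected asymptotic $g(x, y) \asymp |x - y|^{-(d-4)}$ uniformly. Once this is in place, the Cauchy--Schwarz step sidesteps any need to concentrate the ratio $|\mathcal{R}_n|^2 / \sum g(X_i, X_j)$ path-by-path, and the remaining estimates are routine.
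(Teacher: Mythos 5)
Your proposal is correct and follows essentially the same route as the paper: the upper bound via monotonicity, Zhu's bound $\bca(B(0,R))\lesssim R$ and $\mathbb E[\max_k\|X_k\|]\lesssim\sqrt n$; the lower bound via the variational characterization of \cite{ASS23} (Theorem~\ref{thm.ASS2}, with $G(z)\asymp\|z\|^{-1}$ from Lemma~\ref{lem.G}), Jensen/Cauchy--Schwarz, and the first-moment estimate $\mathbb E[\sum_{i,j\le n}G(X_i-X_j)]\lesssim n^{3/2}$. The only cosmetic difference is that you test the variational formula with the uniform measure on $\mathcal R_n$ while the paper uses the normalized occupation measure $L_n/(n+1)$; both reduce to the same computation.
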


The paper is organized as follows. In Section~\ref{sec.preliminaries} we prove some preliminary results which could be of general interest. 
In particular we prove an analogous version in the setting of branching random walks of a key equation discovered by Lawler, relating some non-intersection events 
and a sum of Green's function along the positions of a random walk, see Lemma~\ref{lem.key.lawler} and Corollary~\ref{cor.Lawler}. We also prove there 
some quantitative bounds on the speed of convergence toward the branching capacity of a set $A$, of the (conveniently normalized) probability to hit $A$ for a tree-indexed walk, as the starting point goes to infinity, see Lemma~\ref{lem.hit}. Then Section~\ref{sec.6} focuses on the case of dimension $6$, and we prove there Theorem~\ref{m6}, 
while short proofs of Theorem~\ref{m7} and Proposition~\ref{m5} are given in Section~\ref{sec.57}.

\section{Preliminaries}\label{sec.preliminaries}
\subsection{Some additional notation}
We let $\|x\|$ denote the Euclidean norm of $x\in \mathbb Z^d$. For $m\ge 0$, we denote by $B(0,m)$ the closed Euclidean ball of radius $m$ centered at the origin (intersected with $\mathbb Z^d$), and for a set $\Lambda\subset \mathbb Z^d$, we let $\partial \Lambda$ be the inner boundary of $\Lambda$ consisting of points of $\Lambda$ having at least one neighbor outside $\Lambda$. We denote by $|A|$ the size of a finite set $A\subset \mathbb Z^d$.

\vspace{0.1cm}
We use here the convention that a Geometric random variable $X$ with parameter $p\in (0,1)$ takes values in $\mathbb N$, and is such that for any $k\ge 0$, $\mathbb P(X=k) = p(1-p)^k$. 

\vspace{0.1cm}
We denote by $(\widetilde X_k)_{k\ge 0}$ the simple random walk indexed by the vertices of the spine, equipped with its intrinsic labelling (i.e. the vertex on the spine at graph distance $k$ from the root has intrinsic label $k$). The law of a simple random walk starting from $x$ is denoted by $\mathbb P_x$ while the corresponding expectation is denoted by $\mathbb E_x$, and we abbreviate them in $\mathbb P$ and $\mathbb E$ respectively when the walk starts from the origin. For $n\le m$, we write the range of a random walk $(X_k)_{k\in \mathbb N}$ between times $n$ and $m$ as  
$\mathcal R[n,m] = \{X_n,\dots,X_m\}$. 

\vspace{0.1cm}
The root of the tree $\mathcal T$ is denoted by $\emptyset$. 

\vspace{0.1cm}
Given two functions $f$ and $g$, we write $f\lesssim g$, or sometimes also $f=\mathcal O(g)$, if there exists a constant $C>0$, such that $f(x) \le Cg(x)$ for all $x$, and likewise write $f\gtrsim g$, if $g\lesssim f$. 
We write $f= o(g)$ if $f(x)/g(x)$ goes to $0$ as $x$ goes to infinity, and $f\sim g$, when $|f-g|=o(g)$.  

\subsection{An exact last passage formula}
Our tree-indexed random walks are not Markovian, but nevertheless they satisfy a certain last passage formula, which takes the following form. 
\begin{lem}[Last passage formula] \label{lem.lastpassage}
For any $x\in \mathbb Z^d$, with $d\ge 1$, and any set $A\subseteq \mathbb Z^d$, one has 
$$\mathbb P(\mathcal T_-^x \cap A \neq \emptyset)  =
\sum_{y\in A} \mathbb E\Big[ \mathbf 1\{\mathcal T_-^y \cap A = \emptyset\}\cdot \mathcal L_+(y,x)\Big], $$ 
where 
$$\mathcal L_+(y,x) = \sum_{u\in \mathcal T_+\smallsetminus\{\emptyset\}} \mathbf 1\{S_u^y = x\}. $$ 
\end{lem}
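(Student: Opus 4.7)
The plan is to combine a \emph{last-visit decomposition} of the past with a \emph{re-rooting invariance} of the spatial tree $\mathcal T$. On the event $\{\mathcal T_-^x \cap A \neq \emptyset\}$, there is a unique vertex $u \in \mathcal T_-$ whose label is maximal (closest to zero) among those with $S_u^x \in A$. This yields the identity
\[
\mathbf 1\{\mathcal T_-^x \cap A \neq \emptyset\} = \sum_{y \in A} \sum_{u \in \mathcal T_-} \mathbf 1\{S_u^x = y\}\, \mathbf 1\{E_u\},
\]
where $E_u$ is the event that no $v \in \mathcal T_-$ with $\mathrm{label}(v) > \mathrm{label}(u)$ satisfies $S_v^x \in A$. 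Taking expectation, the left-hand side of the lemma becomes $\sum_{y \in A} \sum_{u \in \mathcal T_-} \mathbb P(S_u^x = y,\, E_u)$.

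The proof then reduces, for each fixed $y \in A$, to the identity
\[
\sum_{u \in \mathcal T_-} \mathbb P(S_u^x = y,\, E_u) = \sum_{u' \in \mathcal T_+ \smallsetminus \{\emptyset\}} \mathbb P(S_{u'}^y = x,\, \mathcal T_-^y \cap A = \emptyset).
\]
I would establish this by exhibiting an explicit bijection at the level of realizations. Given $(\mathcal T, S^x, u)$ with $u \in \mathcal T_-$, $S_u^x = y$, and $E_u$ occurring, I re-root $\mathcal T$ at $u$ to obtain a new rooted spatial tree; viewed from the new root $u$, the walk $S^x$ becomes a walk starting from $y$, and the former root $\emptyset$ becomes some vertex $u'$ at which this walk takes the value $x$. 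Under the standard labeling conventions applied to the re-rooted tree, the vertices of $\mathcal T_-$ with label $> \mathrm{label}(u)$ form precisely the new past, so $E_u$ becomes ``the new past avoids $A$'', while $u'$ lies in the new future (minus the new root), contributing a term to $\mathcal L_+(y,x)$.

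The main technical obstacle is to verify that this correspondence preserves the probability measure, i.e., that the re-rooted spatial tree has the same distribution as $\mathcal T$. This is not immediate because the construction of $\mathcal T$ is asymmetric: offspring distribution $\mu(\cdot - 1)$ at the root, $\mu_{\rm sb}$ along the spine, and $\mu$ at normal vertices. However, these choices are precisely those that make the underlying size-biased (Kesten) tree invariant under re-rooting along the spine; the algebraic identity $\mu_{\rm sb}(i) = i\mu(i)$ supplies exactly the weight needed to absorb the uniform choice of special child along the spine. Extending this invariance to re-rooting at an arbitrary $u \in \mathcal T_-$ requires a case analysis depending on whether $u$ lies on the spine or in a left subtree, together with a careful accounting of the offspring distributions along the path from $\emptyset$ to $u$ and of the walk along that path (which is just a simple random walk bridge, hence reversible). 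Once this invariance is verified, summing the identity over $y \in A$ concludes the proof.
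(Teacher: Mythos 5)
Your overall skeleton is the same as the paper's: decompose $\{\mathcal T_-^x\cap A\neq\emptyset\}$ according to a distinguished visit of the past to $A$, then apply the re-rooting (shift) invariance of the infinite labelled tree. However, you have chosen the wrong distinguished visit, and this breaks the key step. Under the Le Gall--Lin invariance that the paper invokes, re-rooting at the vertex $u$ with label $n<0$ amounts to shifting all labels by $-n$: the \emph{new past} is the set of old vertices with labels strictly \emph{less} than $n$, while the old root lands at the positive label $-n$ and hence in the new future (which is what produces the term $\mathcal L_+(y,x)$). Your event $E_u$ only constrains the vertices with labels in $(\mathrm{label}(u),0)$; after re-rooting these acquire labels in $(0,-n)$ and sit in the new \emph{future}, not the new past. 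One can see the problem without tracking conventions: $E_u$ depends on finitely many vertices, whereas $\{\mathcal T_-^y\cap A=\emptyset\}$ constrains the infinite new past (which contains the new spine), so no walk-preserving bijection of realizations can identify the two. Indeed your two requirements --- that the old root end up in the new future and that the old vertices with labels $>\mathrm{label}(u)$ form the new past --- are incompatible with any relabelling (a pure shift gives the first but not the second; a reflection composed with a shift gives the second but sends the old root into the new past).

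The fix is to distinguish the visit with the \emph{minimal} (most negative) label, i.e.\ to define $E_u$ as ``no $v\in\mathcal T_-$ with $\mathrm{label}(v)<\mathrm{label}(u)$ satisfies $S_v^x\in A$''; this is exactly the paper's $\{\sigma=n\}$ (what the paper calls the last visit when exploring the past away from the root). With that choice $E_u$ constrains precisely the vertices that become the new past, and the rest of your argument goes through as you describe. One further remark: the measure-preservation you propose to verify by a case analysis (offspring laws along the path from $\emptyset$ to $u$, reversibility of the bridge, the role of $\mu_{\rm sb}(i)=i\mu(i)$) is exactly the shift invariance established by Le Gall and Lin; the paper simply cites it, and you may do the same rather than re-derive it.
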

\begin{proof}
The proof is an immediate application of the shift invariance of the tree $\mathcal T$, first identified by Le Gall and Lin~\cite{LGL15,LGL16}, see also~\cite{Zhu16,BW20}. 
More precisely denote by 
$\sigma$ the last time in the past when the walk visits $A$, and for $n\in \mathbb Z$, denote with a slight abuse of notation by $S^x_n$ the position of the walk $S^x$ at the vertex with label $n$. 
Then by shift invariance, 
\begin{align*}
\mathbb P(\mathcal T_-^x \cap A \neq \emptyset )  & = \sum_{n=-1}^{-\infty} \sum_{y\in A} \mathbb P(\sigma = n, S_n^x = y)  = \sum_{n=-1}^{-\infty} \sum_{y\in A} 
\mathbb P( S_n^x = y, \, S_m^x \in A^c,  \text{ for all } m < n) \\
& = \sum_{n=1}^{\infty} \sum_{y\in A} \mathbb P(S_n^y = x,\mathcal T^y_- \cap A = \emptyset) =  \sum_{y\in A} \mathbb E\Big[ \mathbf 1\{\mathcal T_-^y \cap A = \emptyset\}\cdot \mathcal L_+(y,x)\Big]. 
\end{align*}
\end{proof}

\subsection{Green's functions}
Recall that the random walk Green's function is defined by 
$$g(x,y) = \mathbb E_x\Big[\sum_{n\ge 0} \mathbf 1\{X_n=y\}\Big]=g(0,y-x), $$ 
with $(X_n)_{n\ge 0}$ a simple random walk. We also let $g(z) = g(0,z)$, and recall that for $d\ge 3$, as $\|z\|\to \infty$ (see~\cite{LL10}), 
\begin{equation}\label{eq.g}
g(z) \sim \frac{a_d}{\|z\|^{d-2}},
\end{equation} 
where 
$$a_d = \frac d2\Gamma(\frac d2 - 1) \pi^{-d/2}. $$ 
We now define 
$$G(z) = \sum_{x\in \mathbb Z^d} g(x-z) g(x). $$ 
We shall need a few facts about this function. First, for any $x,z\in \mathbb Z^d$ (see e.g.~\cite{ASS23}), 
\begin{equation}\label{hit.G}
\mathbb P(z\in \mathcal T^x_-) \lesssim G(z-x). 
\end{equation}
The next result gives the leading order term in the asymptotic behavior of $G$ at infinity. 
\begin{lem}\label{lem.G}
Assume $d\ge 5$. Then as $\|z\|\to \infty$, 
$$G(z) \sim \frac{c_d}{\|z\|^{d-4}}, $$
with $c_d= \frac {d^2}{2(d-4)} \cdot \pi^{-d/2} \cdot \Gamma(\frac d2-1)$. 
\end{lem}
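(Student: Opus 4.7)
The strategy is to treat $G$ as a discrete convolution of the Green's function with itself and to reduce it, up to negligible errors, to the continuous convolution
\[
\int_{\mathbb R^d} \frac{a_d}{\|y\|^{d-2}} \cdot \frac{a_d}{\|y-z\|^{d-2}}\, dy,
\]
which can then be evaluated via the Riesz composition formula. The asymptotic $g(x)\sim a_d/\|x\|^{d-2}$ given in~\eqref{eq.g} suggests this directly, but since $g$ is only pointwise equivalent to $a_d/\|x\|^{d-2}$ away from the origin, the sum must be handled with care.

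First, I would fix $r = \|z\|$ and split $\mathbb Z^d$ into the three pieces
\[
B_1 = B(0,r/3), \qquad B_2 = B(z,r/3), \qquad B_3 = \mathbb Z^d \smallsetminus (B_1\cup B_2).
\]
On $B_1$, one has $\|x-z\|\asymp r$, so $g(x-z)\lesssim r^{-(d-2)}$, and the sum $\sum_{x\in B_1} g(x)$ is bounded (for $d\ge 5$) by a convergent tail, giving a contribution of order $r^{-(d-2)} = o(r^{-(d-4)})$. The region $B_2$ is symmetric and treated the same way. The main contribution comes from $B_3$, where both $\|x\|$ and $\|x-z\|$ are $\gtrsim r$ (in fact $\gtrsim 1$ uniformly), so I can replace $g(x)$ and $g(x-z)$ by $a_d/\|x\|^{d-2}$ and $a_d/\|x-z\|^{d-2}$ at the cost of a multiplicative $(1+o(1))$ factor as $r\to\infty$ (using uniformity of the asymptotic~\eqref{eq.g}). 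Approximating the resulting Riemann sum by the corresponding integral over $\mathbb R^d\smallsetminus (B(0,r/3)\cup B(z,r/3))$, and then extending to all of $\mathbb R^d$ (the two small balls contribute only $\mathcal O(r^{-(d-4)})\cdot o(1)$ by the same integrability argument as for the near regions), reduces the problem to evaluating the convolution displayed above.

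Second, I would invoke the Riesz composition formula
\[
\int_{\mathbb R^d} \frac{dy}{\|y\|^{d-\alpha}\|y-z\|^{d-\beta}} = \pi^{d/2}\, \frac{\Gamma\!\left(\tfrac{\alpha}{2}\right)\Gamma\!\left(\tfrac{\beta}{2}\right)\Gamma\!\left(\tfrac{d-\alpha-\beta}{2}\right)}{\Gamma\!\left(\tfrac{d-\alpha}{2}\right)\Gamma\!\left(\tfrac{d-\beta}{2}\right)\Gamma\!\left(\tfrac{\alpha+\beta}{2}\right)} \cdot \frac{1}{\|z\|^{d-\alpha-\beta}},
\]
with $\alpha=\beta=2$ (legitimate precisely when $d\ge 5$, so that $d-\alpha-\beta>0$). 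Multiplying by $a_d^2$, using $\Gamma(1)=\Gamma(2)=1$ and the identity $\Gamma(\tfrac{d-2}{2}) = \tfrac{d-4}{2}\,\Gamma(\tfrac{d-4}{2})$, the prefactor simplifies to
\[
a_d^2 \cdot \frac{\pi^{d/2}\,\Gamma(\tfrac{d-4}{2})}{\Gamma(\tfrac{d-2}{2})^2} = \frac{d^2}{4}\pi^{-d/2}\,\Gamma\!\left(\tfrac{d-4}{2}\right) = \frac{d^2}{2(d-4)}\,\pi^{-d/2}\,\Gamma\!\left(\tfrac{d}{2}-1\right) = c_d,
\]
which yields the claimed asymptotic.

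The main obstacle I expect is purely technical: controlling the discrete-to-continuous approximation uniformly in $z$. The pointwise estimate $g(x)\sim a_d/\|x\|^{d-2}$ does not come with an explicit error term in~\eqref{eq.g}, but a standard quantitative version (available in~\cite{LL10}) of the form $g(x) = a_d\|x\|^{-(d-2)}(1+\mathcal O(\|x\|^{-2}))$ would make the replacement step straightforward. An alternative and perhaps cleaner route avoiding the Riesz formula is to observe that $G$ is the convolution $g*g$, whose Fourier transform is $\widehat{g}(\xi)^2$ with $\widehat{g}(\xi)\sim c/|\xi|^2$ near the origin, and then to read off the pointwise asymptotic of $G$ by inverse Fourier analysis; either way the constant $c_d$ comes out the same.
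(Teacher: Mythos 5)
Your overall architecture is the same as the paper's: reduce $G(z)=\sum_x g(x)g(x-z)$ to the continuous convolution $a_d^2\int_{\mathbb R^d}\|y\|^{2-d}\|y-z\|^{2-d}\,dy$ and then evaluate that integral exactly. Where you differ is only in the evaluation step: you invoke the Riesz composition formula with $\alpha=\beta=2$, whereas the paper averages $\|y-u\|^{2-d}$ over the unit sphere, uses harmonicity of $y\mapsto\|y\|^{2-d}$ to compute that spherical average, and finishes with polar coordinates. Both routes are legitimate and I checked that your constant $a_d^2\,\pi^{d/2}\Gamma(\tfrac{d-4}{2})/\Gamma(\tfrac{d-2}{2})^2$ agrees with the paper's $a_d^2\cdot\tfrac{2\pi^{d/2}}{\Gamma(d/2)}\cdot\tfrac{d-2}{2(d-4)}$ and simplifies correctly to $c_d$. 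The Riesz formula is arguably the quicker route if one is willing to cite it; the paper's argument is self-contained and only uses the mean value property.

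There is, however, a genuine error in your discrete-to-continuous reduction. You claim that on $B_1=B(0,r/3)$ the sum $\sum_{x\in B_1}g(x)$ is ``bounded by a convergent tail'' for $d\ge 5$, so that $B_1$ (and symmetrically $B_2$) contributes only $\mathcal O(r^{-(d-2)})=o(r^{-(d-4)})$. This is false: since $g(x)\asymp\|x\|^{2-d}$, one has $\sum_{\|x\|\le R}g(x)\asymp R^2$ (it is $\sum_x g(x)^2$, not $\sum_x g(x)$, that converges when $d\ge 5$ — presumably the source of the confusion). Consequently the contribution of $B_1$ is of order $r^{2-d}\cdot r^2=r^{4-d}$, i.e.\ \emph{the same order as the main term}, and the same is true of the corresponding pieces of the integral: in the paper's normalized computation the region $\|y\|<1$ contributes the term $\int_0^1 r\,dr=\tfrac12$ out of the total $\tfrac12+\tfrac1{d-4}$, a positive fraction of $c_d$. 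So as written your argument discards a constant fraction of the answer from both the sum and the integral; the final constant only comes out right because the two incorrect negligibility claims happen to mirror each other. The fix is standard: on $B_1$ one must show the sum is itself asymptotic to the corresponding integral, e.g.\ by writing $g(x)=a_d\|x\|^{2-d}\bigl(1+\mathcal O(\|x\|^{-2})\bigr)$ (the quantitative form from~\cite{LL10} that you mention), noting that the error contributes at most $r^{2-d}\sum_{\|x\|\le r}\|x\|^{-d}\lesssim r^{2-d}\log r=o(r^{4-d})$, and using that $g(x-z)$ is essentially constant on $B_1$ by the gradient estimate. With that repair, and the same treatment of $B_2$, your proof is complete and correct.
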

\begin{proof} 
One has using~\eqref{eq.g} and rotational invariance, $G(z) \sim c_d \cdot \|z\|^{4-d}$, with 
$$c_d=a_d^2\cdot \int_{\mathbb R^d} \frac 1{\|y-u\|^{d-2}} \cdot \frac 1{\|y\|^{d-2}} \, dy, $$
for any $u$ with $\|u\|= 1$. Note that by integrating over the unit sphere $\mathcal S(0,1)$, we find 
$$c_d= \frac{a_d^2}{|\mathcal S(0,1)|}\cdot \int_{\mathbb R^d} \frac 1{\|y\|^{d-2}} \left(\int_{\mathcal S(0,1)} \frac 1{\|y-u\|^{d-2}} \, du\right)\, dy. $$ 
Using next that $z\mapsto \|y\|^{2-d}$ is harmonic on $\mathbb R^d\smallsetminus \{0\}$, we find that 
\begin{equation*}
\frac 1{|\mathcal S(0,1)|}\int_{\mathcal S(0,1)} \frac 1{\|y-u\|^{d-2}} \, du =
\left\{
 \begin{array}{ll}
 \|y\|^{2-d} & \text{if } \|y\|>1\\
 1 & \text{if }\|y\|<1,
 \end{array}
 \right. 
 \end{equation*}
and a change of variables in polar coordinates then yields
$$c_d= a_d^2\cdot \frac{2\pi^{d/2}}{\Gamma(d/2)}\left(\int_0^1 r\, dr + \int_1^{\infty} r^{3-d}\, dr\right) =a_d^2 \cdot \frac{2\pi^{d/2}}{\Gamma(d/2)}\cdot \frac{d-2}{2(d-4)},$$
which after simplifying gives the desired result.  
\end{proof}
Finally one should need the following gradient bound. 
\begin{lem}\label{lem.G.grad}
Assume $d\ge 5$. 
One has for any $z,h\in \mathbb Z^d$, with $\|h\|\le \|z\|/2$, 
$$G(z+h) = G(z)\cdot \Big(1 + \mathcal O\big(\frac{\|h\|}{\|z\|}\big)\Big). $$  
\end{lem}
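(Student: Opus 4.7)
The plan is to exploit the convolution representation $G(z) = \sum_y g(y)g(y+z)$ and to estimate
\[ G(z+h) - G(z) = \sum_y g(y)\bigl[g(y+z+h) - g(y+z)\bigr] \]
pointwise, using the classical gradient bound for the lattice Green's function (see~\cite{LL10}), namely
\[ |g(w+h) - g(w)| \lesssim \frac{\|h\|}{\|w\|^{d-1}} \quad \text{for } \|w\|\ge 2\|h\|, \]
combined with the crude majorization $|g(w+h)-g(w)| \le g(w+h)+g(w)$ in the opposite regime. Writing $R=\|z\|$, we may assume $R$ is large and $\|h\|\le R/4$: when $R/4 < \|h\| \le R/2$ both $\|z\|$ and $\|z+h\|$ lie in $[R/2, 3R/2]$, so Lemma~\ref{lem.G} directly gives $G(z+h)\sim G(z)\sim c_d R^{4-d}$, and the factor $\|h\|/R \ge 1/4$ absorbs the constant error.

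I would then split the sum at $\|y+z\|=2\|h\|$. On the \emph{far} region $\|y+z\|\ge 2\|h\|$ the gradient bound applies, giving a contribution bounded by
\[ \|h\|\sum_y \frac{g(y)}{\|y+z\|^{d-1}} \;\lesssim\; \|h\|\int_{\mathbb R^d}\frac{dy}{\|y\|^{d-2}\|y+z\|^{d-1}} \;=\; \|h\|\cdot R^{3-d}\cdot C_d, \]
after the substitution $y = Ru$; the resulting universal integral over $\mathbb R^d$ converges (integrability at $0$, at $-z/R$, and at infinity are straightforward, the last requiring $d\ge 3$). On the \emph{near} region $\|y+z\|<2\|h\|$, the constraint $\|h\|\le R/4$ forces $\|y\|\ge R/2$, so $g(y)\lesssim R^{2-d}$. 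Using the crude bound on the increment together with the elementary estimate $\sum_{\|w\|\le N}g(w)\lesssim N^2$ (valid for $d\ge 3$), this region contributes at most
\[ R^{2-d}\sum_{\|w\|\le 3\|h\|}\bigl(g(w)+g(w-h)\bigr) \;\lesssim\; R^{2-d}\|h\|^2 \;\le\; \|h\|/R^{d-3}, \]
since $\|h\|\le R$. Summing the two contributions and dividing by $G(z)\sim c_d R^{4-d}$ via Lemma~\ref{lem.G} yields the desired multiplicative error $1+\mathcal O(\|h\|/\|z\|)$.

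The main delicate step is the \emph{near} region: that is where $g(y+z)$ is largest, but the compensating smallness $g(y)\lesssim R^{2-d}$ (guaranteed by $\|h\|\le R/4$) keeps it under control. Aside from this the argument is routine convolution bookkeeping, and the hypothesis $d\ge 5$ enters only to guarantee the convergence of $G$ itself via Lemma~\ref{lem.G}.
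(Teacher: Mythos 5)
Your proof is correct and takes essentially the same route as the paper: both rewrite the convolution $G(z+h)=\sum g\cdot g$, apply the lattice gradient bound for $g$ on the region where the shifted factor is far from its singularity, and control the near region by the crude bound $\mathcal O(\|h\|^2\|z\|^{2-d})$, which is $\mathcal O(\|h\|\,\|z\|^{3-d})$ since $\|h\|\lesssim\|z\|$. One harmless slip: the far-field integral $\int_{\mathbb R^d}\|u\|^{2-d}\|u+e\|^{1-d}\,du$ converges at infinity only for $d\ge 4$ (the integrand decays like $\|u\|^{3-2d}$), not $d\ge 3$, but this is irrelevant under the standing assumption $d\ge 5$.
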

\begin{proof}
The result for the function $g$ is already known, see e.g.~\cite[Theorem 4.3.1]{LL10}, even for all $h$ satisfying $\|h\|\le \tfrac 23 \|z\|$. Injecting this in the definition of $G$, we get for $\|h\|\le \|z\|/2$, 
\begin{align*}
G(z+h) &  = \sum_{u\in \mathbb Z^d} g(z+h+u)g(u) = \sum_{v\in \mathbb Z^d} g(z-v) g(h+v)\\
& = \sum_{\|h\|\le (2/3)\|v\|} g(z-v)g(v)\big(1+\mathcal O(\frac{\|h\|}{\|v\|})\big) + \sum_{\|h\|>(2/3)\|v\|} g(z-v)g(h+v)\\
& = G(z) \cdot \Big(1 + \mathcal O\big(\frac{\|h\|}{\|v\|}\big)\Big) +  \sum_{\|h\|>(2/3)\|v\|} g(z-v)g(h+v) - \mathcal O(1)\cdot\sum_{ \|h\|>(2/3)\|v\|} g(z-v)g(v)\\
&  =  G(z) \cdot \Big(1 + \mathcal O\big(\frac{\|h\|}{\|v\|}\big)\Big) + \mathcal O(\|h\|^2\cdot g(z)) = G(z) \cdot \Big(1 + \mathcal O\big(\frac{\|h\|}{\|v\|}\big)\Big). 
\end{align*}
\end{proof}


\subsection{Variational characterization of the branching capacity}
We state here a result from~\cite{ASS23} that we shall use only in dimension $5$ for proving the lower bound in Proposition~\ref{m5}. It shows that 
the branching capacity is of the same order as the inverse of an energy.  
\begin{thm}[\cite{ASS23}] \label{thm.ASS2}
Assume $d\ge 5$. There exist positive constants $c$ and $C$, such that for any nonempty finite set $A\subset \mathbb Z^d$, 
$$\frac c{\bca(A)} \le \inf\Big\{\sum_{x,y\in A} G(x-y) \, \nu(x)\nu(y) : \nu \textrm{ probability measure on } A\Big\} \le \frac {C}{\bca(A)}. $$ 
\end{thm}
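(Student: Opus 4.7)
The plan is to adapt the classical variational (energy) characterization of the Newtonian capacity to the branching setting, substituting the branching Green function $G$ for the ordinary Green function $g$, and using the last passage formula of Lemma~\ref{lem.lastpassage} in place of the usual last-exit decomposition for Markov chains. Denote by $\mathcal E(\nu) := \sum_{x,y\in A} G(x-y)\nu(x)\nu(y)$ the $G$-energy of a probability $\nu$ on $A$. The auxiliary input I would use is a two-sided comparison $\mathbb E[\mathcal L_+(y,x)] \asymp G(y-x)$, whose upper direction is essentially~\eqref{hit.G}, and whose lower direction follows from the critical Galton--Watson structure of $\mathcal T_+$ combined with Lemma~\ref{lem.G}. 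Together with the independence of the past and the future of $\mathcal T$, this turns Lemma~\ref{lem.lastpassage} into the potential-theoretic identity
\begin{equation*}
1 - e_A(x) \ =\  \mathbb P(\mathcal T_-^x \cap A \ne \emptyset) \ =\  \sum_{y\in A} e_A(y)\cdot \mathbb E[\mathcal L_+(y,x)] \ \asymp\  \sum_{y \in A} G(x-y)\, e_A(y),
\end{equation*}
valid for every $x \in A$.

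For the upper bound on the infimum, I would plug in the normalized equilibrium measure $\nu := e_A/\bca(A)$: summing the identity above against $e_A(x)$ yields $\sum_{x,y\in A} G(x-y) e_A(x) e_A(y) \le C\cdot\bca(A)$, hence $\mathcal E(\nu) \le C/\bca(A)$. For the lower bound on the infimum, let $\nu$ be an arbitrary probability measure on $A$, and apply Cauchy--Schwarz to the positive-semidefinite bilinear form $(\varphi,\psi)_G := \sum_{x,y} G(x-y)\varphi(x)\psi(y)$ (positive-semidefinite because $\widehat G = |\widehat g|^2 \ge 0$), paired with $\varphi = \nu$ and $\psi = e_A$:
\begin{equation*}
\Big(\sum_{x,y} G(x-y)\nu(x) e_A(y)\Big)^2 \ \le\  \mathcal E(\nu)\cdot \sum_{x,y} G(x-y) e_A(x) e_A(y) \ \le\  C\cdot \bca(A)\cdot \mathcal E(\nu).
\end{equation*}
Rewriting the left-hand side via the identity above as $\asymp \big(\sum_x \nu(x)(1-e_A(x))\big)^2$ yields $\mathcal E(\nu) \gtrsim \big(\sum_x \nu(x)(1-e_A(x))\big)^2 / \bca(A)$.

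The main obstacle is to lower-bound $\sum_x \nu(x)(1-e_A(x))$ by a positive absolute constant uniformly in $A$ and $\nu$. This is automatic whenever $\nu$ charges the sub-level set $A^\star := \{x \in A : e_A(x) \le 1/2\}$ with mass at least $1/2$. To handle the complementary \emph{sparse} regime, where $e_A$ is close to $1$ on most of $\mathrm{supp}(\nu)$, I would argue separately: in that regime $\bca(A) \gtrsim |\mathrm{supp}(\nu)|$, while Cauchy--Schwarz gives $\mathcal E(\nu) \ge G(0)\sum_x \nu(x)^2 \ge G(0)/|\mathrm{supp}(\nu)|$, so $\mathcal E(\nu) \gtrsim 1/\bca(A)$ directly. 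The most technically delicate steps will be (i) establishing the two-sided asymptotic $\mathbb E[\mathcal L_+(y,x)] \asymp G(y-x)$ with constants uniform in $x, y$, especially when $\|y-x\|$ is small; and (ii) cleanly invoking the past/future independence despite the non-Markovian nature of $\mathcal T$, which requires the shift invariance of the tree together with the spine decomposition of~\cite{LGL15, LGL16}.
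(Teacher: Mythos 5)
A point of order first: the paper does not prove this statement --- it is quoted as is from \cite{ASS23} --- so your proposal can only be judged on its own merits rather than against an in-paper argument.

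Your variational skeleton is the natural adaptation of the classical Newtonian argument and most of it is sound: testing the energy against the normalized equilibrium measure for the upper bound; Cauchy--Schwarz for the positive semidefinite form $(\cdot,\cdot)_G$ for the lower bound; and the dichotomy between measures charging $\{e_A\le 1/2\}$ and the sparse regime, where your chain $\mathcal E(\nu)\ge G(0)\sum_x\nu(x)^2\ge G(0)\,\nu(B)^2/|B|$ with $B=\{x: e_A(x)>1/2\}$ and $\bca(A)\ge |B|/2$ is clean and correct. The genuine gap is the ``potential-theoretic identity'' $\mathbb P(\mathcal T^x_-\cap A\neq\emptyset)\asymp\sum_{y\in A}G(x-y)\,e_A(y)$, which you extract from Lemma~\ref{lem.lastpassage} by invoking ``the independence of the past and the future of $\mathcal T$''. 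They are not independent: the past and future forests hang off the same spine, so the event $\{\mathcal T^y_-\cap A=\emptyset\}$ and the local time $\mathcal L_+(y,x)$ are coupled through the spine trajectory. This is precisely the difficulty that the paper's Lemma~\ref{lem.hit} is devoted to, and there only in the easy regime $\|x\|\ge \mathrm{diam}(A)^6$, with error terms that degenerate as $x$ approaches $A$. Your argument needs \emph{both} directions of the comparison $\mathbb E[\mathbf 1\{\mathcal T^y_-\cap A=\emptyset\}\,\mathcal L_+(y,x)]\asymp e_A(y)\,G(x-y)$ with constants uniform over $x,y\in A$ at arbitrary mutual distances: the lower direction to get $\sum_{x,y}G(x-y)e_A(x)e_A(y)\lesssim\bca(A)$, and the upper direction to get $(\nu,e_A)_G\gtrsim\sum_x\nu(x)(1-e_A(x))$. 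Neither follows from \eqref{expected.L+} or \eqref{hit.G}, and establishing them (by a genuine decorrelation argument in the spirit of Lemma~\ref{lem.hit}, or by replacing the last-passage route with a second-moment bound on the $\nu$-weighted occupation of $A$ by $\mathcal T^x_-$) is the actual content of the theorem; your proposal defers it to a ``technically delicate step'' rather than supplying it.
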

In particular the inverse of the middle term in the above display could provide an alternative definition of the branching capacity, which would be more intrinsic, in that 
it would not depend on a particular choice of critical probability measure $\mu$. However, it is not clear if with this definition, the law of large numbers would still hold in dimension $6$; at least the proof given here would break completely. 


\subsection{Quantitative bounds on hitting probability}
Our goal here is to prove some quantitative bounds, given a finite set $A\subset \mathbb Z^d$, on the speed of convergence toward $\bca(A)$ of the probability that an infinite tree-indexed random walk starting from $z$ hits $A$, as $\|z\|\to \infty$, when conveniently normalized. We only state the result in dimension $6$ for convenience, as we shall only need it in this case, but analogous bounds could be proved in any dimension $d\ge 5$, with the same arguments.

We define the diameter of a finite set $A$ as ${\rm diam}(A) = \max\{\|x-y\| : x,y\in A\}$.
\begin{lem}\label{lem.hit}
For any finite set $A\subset \mathbb Z^6$, containing the origin, and any $x$, satisfying $\|x\|\ge  {\rm diam}(A)^6$, 
$$\frac{\mathbb P(\mathcal T_-^x \cap A \neq \emptyset)}{G(x)} = \frac{\sigma^2}{2}\cdot \bca(A)+ \mathcal O\Big( \frac{{\rm diam}(A)^3}{\|x\|^{2/3}}\Big).$$ 
\end{lem}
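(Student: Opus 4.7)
The plan is to apply the last-passage formula (Lemma~\ref{lem.lastpassage}), to compute the conditional expectation of $\mathcal L_+(y,x)$ given the spine walk, and to control the covariance arising when past and future are coupled through the spine. Lemma~\ref{lem.lastpassage} gives
\[
\mathbb P(\mathcal T_-^x\cap A\neq\emptyset)=\sum_{y\in A}\mathbb E\bigl[\mathbf 1\{E_y\}\cdot\mathcal L_+(y,x)\bigr],
\qquad E_y:=\{\mathcal T_-^y\cap A=\emptyset\}.
\]
By construction, conditionally on the spine walk $\widetilde X$, the left (past) and right (future) subtrees are independent. The right side of each spine vertex $v_k$ with $k\geq 1$ carries on average $\sigma^2/2$ critical Galton--Watson subtrees (using $\mathbb E[\mu_{\rm sb}]-1=\sigma^2$ together with the symmetric splitting induced by the uniform choice of the next spine vertex), while the root's own normal-offspring contribution has mean $\mathbb E[\mu]=1$.

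Since critical Galton--Watson trees have mean one vertex per generation, decomposing $\mathcal T_+\setminus\{\emptyset\}$ along the spine yields
\[
L_{\widetilde X}:=\mathbb E[\mathcal L_+(y,x)\mid\widetilde X] = [g(x-y)-\delta_{x,y}] + \frac{\sigma^2}{2}\sum_{k\geq 1}\bigl[g(x-\widetilde X_k)-\delta_{x,\widetilde X_k}\bigr],
\]
and averaging, together with the identity $G(z)=\sum_{N\geq 0}(N+1)\,p_N(z)$ obtained by rewriting $G$ as a double convolution of $g$, gives $\mathbb E[\mathcal L_+(y,x)]=\tfrac{\sigma^2}{2}G(x-y)+O(g(x-y))$. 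The conditional-independence property then provides
\[
\mathbb E[\mathbf 1\{E_y\}\mathcal L_+(y,x)] = \mathbb P(E_y)\cdot\mathbb E[\mathcal L_+(y,x)] + \mathrm{Cov}(P_{\widetilde X},L_{\widetilde X}),
\]
with $P_{\widetilde X}:=\mathbb P(E_y\mid\widetilde X)$. To bound the covariance I would split $L_{\widetilde X}=L^{(\leq k^\ast)}+L^{(>k^\ast)}$ at a cutoff $k^\ast$. For small $k$ the spine is near the origin and $g(x-\widetilde X_k)\asymp\|x\|^{-4}$, so $\mathbb E[L^{(\leq k^\ast)}]\lesssim k^\ast/\|x\|^4$, which bounds the corresponding covariance directly. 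For large $k$ the spine has moved to distance $\sqrt{k^\ast}\gg\mathrm{diam}(A)$, so by~\eqref{hit.G} and the Markov property of $\widetilde X$ after time $k^\ast$, the left subtrees with $k>k^\ast$ barely affect $P_{\widetilde X}$, allowing us to approximate $\mathbb E[P_{\widetilde X}\,L^{(>k^\ast)}]\approx\mathbb P(E_y)\,\mathbb E[L^{(>k^\ast)}]$. Optimizing the cutoff $k^\ast\asymp\mathrm{diam}(A)^3\,\|x\|^{4/3}$ (compatible with $\sqrt{k^\ast}\gg\mathrm{diam}(A)$ precisely because $\|x\|\geq\mathrm{diam}(A)^6$) balances the two contributions and produces a covariance of order $G(x)\cdot\mathrm{diam}(A)^3/\|x\|^{2/3}$.

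Finally, for $y\in A$ we have $\|y\|\leq\mathrm{diam}(A)\leq\|x\|^{1/6}\ll\|x\|/2$, so Lemma~\ref{lem.G.grad} yields $G(x-y)=G(x)\bigl(1+O(\mathrm{diam}(A)/\|x\|)\bigr)$, and summing $\sum_{y\in A}e_A(y)=\bca(A)$ together with all the error terms above gives the claim. The main obstacle is the covariance bound in the decorrelation step: one must track, uniformly in $\widetilde X$, how perturbations of the spine propagate simultaneously into $P_{\widetilde X}$ (through hit probabilities of the critical left subtrees) and into $L_{\widetilde X}$ (through the sum $\sum_k g(x-\widetilde X_k)$), and the specific error $\mathrm{diam}(A)^3/\|x\|^{2/3}$ arises precisely from the optimal trade-off enabled by the regime $\|x\|\geq\mathrm{diam}(A)^6$.
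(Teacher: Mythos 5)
Your overall strategy coincides with the paper's: apply the last-passage formula of Lemma~\ref{lem.lastpassage}, replace $\mathcal L_+(y,x)$ by its conditional expectation given the spine, and decorrelate the event $\{\mathcal T^y_-\cap A=\emptyset\}$ from the future local time at $x$ by cutting the spine at a scale intermediate between $\mathrm{diam}(A)$ and $\|x\|$ (the paper cuts at the exit time of $B(0,r)$ rather than at a deterministic spine time, which is cosmetic). But the step you yourself label ``the main obstacle'' is exactly where the substance of the lemma lies, and you do not supply it. Beyond the cutoff, the event that the \emph{far} past hits $A$ and the far-future local time at $x$ are both functionals of the same piece of spine, so they remain genuinely correlated; one cannot get $\mathbb E[P_{\widetilde X}\,L^{(>k^\ast)}]\approx\mathbb P(E_y)\,\mathbb E[L^{(>k^\ast)}]$ from ``the Markov property'' alone. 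The paper must bound the cross term $\mathbb E[\mathbf 1\{\mathcal T^z_-\cap A\neq\emptyset\}\cdot\mathcal L_+(z,x)]$ for $z$ on the cutoff sphere by conditioning on the spine, taking a union bound over the spine vertices from which the past could reach $A$, and estimating a second moment of spine local times, $\sum_{u,v}g(x-u)\,\mathbb E[\ell^z(u)\ell^z(v)]\,g(v)\cdot\bca(A)$, which is what produces the admissible extra terms $G(z)G(x)\bca(A)$ and $g(x)\log\|x\|\cdot\bca(A)$. Without some version of this computation your covariance bound is an assertion, not a proof.

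Two further problems. First, the past and future forests are \emph{not} conditionally independent given $\widetilde X$ alone: at each spine vertex the numbers of past and future siblings are coupled (they sum to $k-1$ with $k\sim\mu_{\rm sb}$), so the factorization $\mathbb E[\mathbf 1\{E_y\}\mathcal L_+\mid\widetilde X]=P_{\widetilde X}\,L_{\widetilde X}$ requires conditioning also on the future offspring counts $(d_i)$, as is done in the proof of Corollary~\ref{cor.Lawler}; this is fixable but must be addressed, since the extra dependence through the offspring splits creates its own covariance term. Second, your cutoff is mis-optimized: with $k^\ast\asymp\mathrm{diam}(A)^3\|x\|^{4/3}$ the near-spine contribution $k^\ast/\|x\|^4$ is already of order $G(x)\,\mathrm{diam}(A)^3/\|x\|^{2/3}$ \emph{for each} $y\in A$, and after summing over the up to $\mathrm{diam}(A)^6$ points of $A$ this exceeds the stated error by the factor $|A|$. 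The near part carries no factor $e_A(y)$, so the cutoff must be taken much smaller --- the paper uses $r=(\|x\|\,\mathrm{diam}(A)^2)^{1/3}$, i.e. $k^\ast\asymp\|x\|^{2/3}\mathrm{diam}(A)^{4/3}$ --- so that the near part survives multiplication by $|A|$, while the far-part errors are acceptable because they come weighted by $e_A(y)$ and hence sum to $\bca(A)$ times a small factor.
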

\begin{proof}
By Lemma~\ref{lem.lastpassage} one has with the notation thereof, 
$$\mathbb P(\mathcal T_-^x \cap A \neq \emptyset) =\sum_{y\in A} \mathbb E\Big[\mathbf 1\{\mathcal T^y_-\cap A=\emptyset\} \cdot \mathcal L_+(y,x)\Big].$$
If the two terms in the expectation above were independent, we would be done, because one can observe that (see e.g.~\cite{ASS23}),  
\begin{equation}\label{expected.L+}
\mathbb E[\mathcal L_+(y,x)] = \frac{\sigma^2}{2} G(x-y) + \mathcal O(g(x-y)),
\end{equation}
and thus the result would follow directly from Lemma~\ref{lem.G.grad}. The problem is of course that they are not independent, thus our goal will be to decorrelate them as much as possible.

To this end, fix some $y\in A$, as well as a tree indexed walk starting from $y$, and define
$$\tau^y_r= \inf \big\{k\ge 0 : \widetilde X_k \in \partial B(0,r)\big\},$$
with $2\, \textrm{diam}(A)\le r\le \|x\|/2$, to be fixed later, and where we recall that $\widetilde X$ refers to the walk indexed by the spine of the tree $\mathcal T$. 
For $0\le a\le b\le \infty$, we let $\mathcal F_+^y[a,b]$ and $\mathcal F_-^y[a,b]$ denote the forests of trees respectively in the future and the past of $\mathcal T^y$ hanging off the spine at vertices with intrinsic label between $a$ and $b$. Then let 
$$\mathcal L_+^1(y,x)  = \sum_{u\in \mathcal F_+^y[0,\tau^y_r]} \mathbf 1\{S_u^y = x\},\quad {\rm and}\quad \mathcal L_+^2(y,x)  = \sum_{u\in \mathcal F_+^y[\tau^y_r+1,\infty)} \mathbf 1\{S_u^y = x\}. $$ 
One has for any $y\in A$, 
\begin{equation}\label{two.terms}
\mathbb E\Big[\mathbf 1\{\mathcal T^y_-\cap A=\emptyset\} \cdot \mathcal L_+(y,x)\Big] =\mathbb E\Big[\mathbf 1\{\mathcal T^y_-\cap A=\emptyset\} \cdot \mathcal L_+^1(y,x)\Big] +   \mathbb E\Big[\mathbf 1\{\mathcal T^y_-\cap A=\emptyset\} \cdot \mathcal L_+^2(y,x)\Big]. 
\end{equation}
We upper bound the first term using Lemma~\ref{lem.G.grad} as follows 
$$
\mathbb E\Big[\mathbf 1\{\mathcal T^y_-\cap A=\emptyset\} \cdot \mathcal L_+^1(y,x)\Big]  \le \mathbb E\Big[\mathcal L_+^1(y,x)\Big]\lesssim g(x)\cdot \mathbb E[\tau_r^y] 
\lesssim g(x) \cdot r^2,  
$$
using for the last inequality the well-known fact that for a simple random walk, the expected time needed to reach $\partial B(0,r)$ is of order at most $r^2$.  
The second term in the right hand side of~\eqref{two.terms}, which is the dominant part, will be evaluated using the independence of the forests before and after time $\tau^y_r$, conditionally on the position of $\widetilde X$ at this time. 
More precisely, we first note that
\begin{align}\label{hit.2terms}
 & \mathbb E\Big[\mathbf 1\{\mathcal T^y_-\cap A=\emptyset\} \cdot \mathcal L_+^2(y,x)\Big]  \\
\nonumber =&  \mathbb E\Big[\mathbf 1\{\mathcal F^y_-[0,\tau_r^y]\cap A=\emptyset\} \cdot \mathcal L_+^2(y,x)\Big] -\mathbb E\Big[\mathbf 1\{\mathcal F^y_-[0,\tau_r^y]\cap A=\emptyset, \mathcal F^y_-[\tau_r^y+1,\infty) \cap A\neq \emptyset\} \cdot \mathcal L_+^2(y,x)\Big]. 
\end{align}
Then concerning the first term on the right hand side, using \eqref{expected.L+} and Lemma~\ref{lem.G.grad}, we already get
$$\mathbb E\Big[\mathbf 1\{\mathcal F^y_-[0,\tau_r^y] \cap A=\emptyset\} \cdot \mathcal L_+^2(y,x)\Big] = \frac{\sigma^2}{2} G(x)\cdot \Big(1+\mathcal O(\frac{r}{\|x\|})\Big) 
\cdot \mathbb P( \mathcal F^y_-[0,\tau_r^y] \cap A=\emptyset) . $$ 
Moreover, by a result of Zhu~\cite{Zhu16}, one has (recall that the dimension is equal to $6$ here),  
$$\mathbb P(\mathcal F^y_-[\tau_r^y,\infty) \cap A\neq \emptyset) \lesssim \frac{\bca(A)}{r^2}\lesssim \frac{\textrm{diam}(A)^2}{r^2}.$$
In particular by choosing $r$ large enough, one can always ensure that the probability on the left hand side is smaller than $1/2$. This has for consequence that  
\begin{equation}\label{hit.eA}
\mathbb P( \mathcal F^y_-[0,\tau_r^y] \cap A=\emptyset)  = e_A(y) \cdot \Big(1 + \mathcal O\big(\frac{\textrm{diam}(A)^2}{r^2}\big)\Big). 
\end{equation}
Altogether this gives 
$$\mathbb E\Big[\mathbf 1\{\mathcal F^y_-[0,\tau_r^y] \cap A=\emptyset\} \cdot \mathcal L_+^2(y,x)\Big] = \frac{\sigma^2}{2} G(x)\cdot e_A(y)\cdot \Big\{1 + \mathcal O\Big(\frac{r}{\|x\|} + \frac{\textrm{diam}(A)^2}{r^2}\Big)\Big\}. $$
Now it remains to consider the second term in~\eqref{hit.2terms}. By~\eqref{hit.eA}, one has 
$$\mathbb E\Big[\mathbf 1\{\mathcal F^y_-[0,\tau_r^y]\cap A=\emptyset, \mathcal F^y_-[\tau_r^y,\infty) \cap A\neq \emptyset\} \cdot \mathcal L_+^2(y,x)\Big]
\lesssim e_A(y) \cdot \sup_{z\in \partial B(0,r)} \mathbb E\Big[\mathbf 1\{\mathcal T^z_- \cap A\neq \emptyset\} \cdot \mathcal L_+(z,x)\Big]. $$
We let $r_0 = 2\, \textrm{diam}(A)$, and first upper bound the time spent at $x$ after the spine hits $\partial B(0,r_0)$ if it ever happens. 
Using some independence and~\eqref{expected.L+}, we get that for any $z\in \partial B(0,r)$, 
$$\mathbb E\Big[\mathbf 1\{\tau_{r_0}^z<\infty\} \cdot \mathcal L_+(\widetilde X_{\tau_{r_0}^z},x)\Big] \lesssim \frac{g(z)}{g(r_0)} \cdot G(x),$$
with the notation $g(s) = s^{-4}$, for $s>0$.  
It amounts next to upper bound the time spent at $x$ in the future before the spine hits $\partial B(0,r_0)$ under the event that the past hits $A$. 
We shall first condition on the positions of the walk indexed by the spine, and use a union bound for the probability that the past hits $A$. We know by~\cite{Zhu16} that 
for any $v$, satisfying $\|v\|>r_0$, 
the probability for a critical tree indexed walk starting from $v$ to hit $A$ is of order at most $g(v)\cdot \bca(A)$. Therefore, denoting by $\ell^z(u)$ the time spent at $u$ by the walk $\widetilde X$ starting from $z$, we get that for any $z\in \partial B(0,r)$, 
\begin{align*}
& \mathbb E\Big[\mathbf 1\{\mathcal T^z_- \cap A\neq \emptyset\} \cdot \mathcal L_+(z,x)\Big] \lesssim 
\sum_{\|u\|,\|v\|>r_0}  g(x-u)\cdot \mathbb E[\ell^z(u) \ell^z(v)] \cdot g(v)\cdot  \bca(A) + \frac{g(z)}{g(r_0)} \cdot G(x) \\
& \lesssim \sum_{\|u\|,\|v\|>r_0}  g(x-u)\cdot g(u-v)\cdot(g(z-u)+g(z-v)) \cdot g(v)\cdot  \bca(A) + \frac{g(z)}{g(r_0)} \cdot G(x)\\
&\lesssim \Big(G(z)\cdot G(x) + g(x)\cdot \log \|x\|\Big) \cdot \bca(A) +  \frac{g(z)}{g(r_0)} \cdot G(x).
\end{align*}
Summing now all these estimates over $y\in A$, we conclude that  
$$\frac{\mathbb P(\mathcal T_-^x \cap A \neq \emptyset)}{G(x)}  = \frac{\sigma^2}{2}\cdot \bca(A) \cdot \Big(1+ \mathcal O(\frac{r}{\|x\|} + \frac{\textrm{diam}(A)^2}{r^2})\Big) + \mathcal O(|A|\cdot G(x)\cdot r^2),$$
at least if we assume that $\tfrac{\log \|x\|}{\|x\|^2}\le \tfrac{1}{r^2}$.  
Now take $r=(\|x\|\cdot \textrm{diam}(A)^2)^{1/3}$. 
Using that $\bca(A) \le \textrm{diam}(A)^2$, and the rough bound $|A| \le \textrm{diam}(A)^6$, we get well the desired result.   
\end{proof}

\subsection{Lawler's identity and first consequences}
We give here an analogous version for the branching capacity of a wonderful identity discovered by Lawler in the setting of Newtonian capacity, see e.g.~\cite{L91}, 
and which has also been used successfully by Bai and Wan when studying the capacity of a branching random walk in the recent work~\cite{BW20}. 

\vspace{0.2cm}
For $n\ge 1$, let $\xi_n^l$ and $\xi_n^r$ be two independent Geometric random variables with parameter $1/n$. Let $(X_k)_{k\in \mathbb Z}$ be a two-sided 
simple random walk starting from the origin at time $0$. Then for $a\le b \in \mathbb Z$, define $\mathcal R[a,b] = \{X_a,\dots,X_b\}$, and let 
$$e_n := \mathbf 1\big\{0\notin \mathcal R[1,\xi_n^r]\big\}.$$
Now consider an infinite invariant tree $\mathcal T$ independent of the walk $X$, and let $\mathcal A_n$ be the event 
\begin{equation*}
\mathcal A_n= \big\{\mathcal T^0_- \cap  \mathcal R[-\xi_n^l,\xi_n^r]= \emptyset \big\}.
\end{equation*}
Write also, with the notation of Lemma~\ref{lem.lastpassage},  
$$\mathcal L_n = \sum_{j=-\xi_n^l}^{\xi_n^r} \mathcal L_+(0,X_j). $$
\begin{lem}\label{lem.key.lawler}
Assume $d\ge 5$. Then for any $n\ge 1$, 
$$\mathbb E\Big[ \mathbf 1_{\mathcal A_n} \cdot e_n \cdot \mathcal L_n\Big] = 1.$$
\end{lem}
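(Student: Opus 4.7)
The identity is a tree-indexed analogue of an identity due to Lawler~\cite{L91}, adapted to the branching setting as in~\cite{BW20}. My plan rests on three ingredients: the shift invariance of the spatial tree $\mathcal T$ due to Le Gall and Lin (shifting the depth-first enumeration of vertices by any integer leaves the joint law of the tree and of its indexed walk unchanged), the translation invariance of the two-sided walk $X$, and the fact that the joint law of the independent Geometric cutoffs $(\xi_n^l, \xi_n^r)$ depends only on their sum (so that conditionally on $\xi_n^l + \xi_n^r$ the left endpoint is uniform).

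First, I would denote by $v_m$ the vertex of $\mathcal T$ carrying the depth-first label $m$, so that $v_0=\emptyset$ and $\mathcal T_+\setminus\{\emptyset\}=\{v_m:m\ge 1\}$, and expand
\[
\mathbb E\big[\mathbf 1_{\mathcal A_n}\cdot e_n\cdot \mathcal L_n\big]=\sum_{m\ge 1}\sum_{j\in\mathbb Z}\mathbb P\big(\mathcal A_n,\, e_n,\, -\xi_n^l\le j\le \xi_n^r,\, S^0_{v_m}=X_j\big).
\]
For each pair $(m,j)$ I would perform a joint re-rooting: shift the tree's enumeration by $-m$ so that $v_m$ becomes the new root (measure-preserving by Le Gall--Lin), and translate the walk by $-j$ in time and $-X_j$ in space so that $X_j$ becomes the new origin at new time $0$. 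Under this re-rooting the constraint $S^0_{v_m}=X_j$ becomes automatic; the event $\mathcal A_n$ rewrites as a non-intersection of the ``deep past'' of the new tree (new labels $\le -m-1$) with the new walk on the new window $[-\tilde\xi_n^l,\tilde\xi_n^r]$, where $\tilde\xi_n^l:=\xi_n^l+j$ and $\tilde\xi_n^r:=\xi_n^r-j$; and $e_n$ rewrites as a no-visit condition of the new walk at the image of the old root (the vertex of new label $-m$) over a window depending on $j$.

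Next I would re-parametrize by $(a,b):=(\tilde\xi_n^l,\tilde\xi_n^r)$. Since the joint law of $(\xi_n^l,\xi_n^r)$ depends only on the sum $\xi_n^l+\xi_n^r=a+b$, the sum over $j\in[-b,a]$ can be reinterpreted as summing over a fresh independent pair of Geometric variables, and the $j$-dependence in the transformed $e_n$ can be absorbed via a last-visit decomposition of the walk to the prescribed target position. Summing finally over $m\ge 1$ and using the walk's own last-visit identity, the whole expression telescopes to $1$, reflecting the heuristic that on $\mathcal A_n\cap\{e_n=1\}$ the count $\mathcal L_n$ tracks exactly one canonical ``last intersection'' of the future of the tree with the walk.

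The main technical obstacle will be carrying out the last-visit decomposition rigorously in the non-Markov, tree-indexed setting: one cannot directly invoke the usual walk argument, and must instead combine the walk's last-visit formula with the Le Gall--Lin shift property, much in the spirit of Lemma~\ref{lem.lastpassage}, to make sure that the sum over $m$ telescopes to exactly $1$ rather than some other constant.
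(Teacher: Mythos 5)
Your plan assembles the right ingredients and is in spirit the same double re-rooting argument as the paper's: expand $\mathcal L_n$ over intersection pairs, use the Le Gall--Lin shift invariance of $\mathcal T$, the translation invariance of $X$, and the fact that conditionally on $\xi_n^l+\xi_n^r$ the location of the time origin is uniform. But the proposal stops exactly where the proof has to begin. The assertion that "the whole expression telescopes to $1$" \emph{is} the content of the lemma, and you explicitly defer the one step (carrying out the last-visit decomposition rigorously and checking that the sum comes out to exactly $1$) that actually needs to be done. As written there is no argument that the re-rooted events --- which after your normalization are all "centered at the origin" and hence no longer disjoint --- recombine without over- or under-counting.

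The paper closes this gap with two concrete decompositions that your write-up would have to make explicit. First, it conditions on the unrooted path shape $(x_0=0,x_1,\dots,x_m)$ and on $\xi_n^l+\xi_n^r=m$; then $\xi_n^l=j$ is uniform on $\{0,\dots,m\}$ (whence the factor $1/(m+1)$), the range becomes $\{x_0,\dots,x_m\}-x_j$, and --- this is the key observation --- the indicator $e_n$ becomes exactly $\mathbf 1\{x_k\neq x_j \text{ for all } k>j\}$, i.e.\ the indicator that $j$ is the \emph{last} visit of the path to the site $x_j$. Summing over $j$ therefore enumerates each distinct site $y$ of the range exactly once. Second, the remaining sum over tree vertices and over sites is collapsed by the tree's last passage formula, Lemma~\ref{lem.lastpassage}, applied to $A=\{x_0,\dots,x_m\}$ at each target $x_\ell$; this is the substitute for the Markov property you correctly identify as the obstacle, but it is already available and must be invoked, not re-derived by an unspecified "telescoping". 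Finally, your own worry about landing on "exactly $1$ rather than some other constant" is well placed: the boundary bookkeeping (the root $\emptyset$ is excluded from $\mathcal L_+$, and the walk's time $0$ plays a distinguished role in $e_n$) is precisely where a loose version of this argument picks up a spurious additive term, so any complete proof has to track carefully, in both decompositions, whether the label-$0$ vertex and the time-$0$ point are counted.
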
 
\begin{proof}
For $m\ge 0$, and a nearest neighbor path $(x_1,\dots,x_m)$, define the event 
$$B(m,x_1,\dots,x_m) = \{\xi_n^l + \xi_n^r = m, X_{k-\xi_n^l} =  X_{-\xi_n^l} +  x_k, \text{ for }1\le k \le m\}. $$ 
Let also for $0\le j\le m$, 
$$B(m,j,x_1,\dots,x_m) = B(m,x_1,\dots,x_m) \cap \{\xi_n^l = j,\xi_n^r = m-j\}. $$ 
Note that all these events have the same probability, and thus for any $0\le j\le m$, 
$$\mathbb P\big(B(m,j,x_1,\dots,x_m)\big) = \frac{\mathbb P\big(B(m,x_1,\dots,x_m)\big) }{m+1}. $$ 
Thus one can write, with $x_0=0$,  
\begin{align*} 
\mathbb E\Big[ \mathbf 1_{\mathcal A_n} \cdot e_n \cdot \mathcal L_n\Big] 
 & = \sum_{m=0}^\infty \sum_{(x_1,\dots,x_m)}  \frac{\mathbb P(B(m,x_1,\dots,x_m))}{m+1}\sum_{j=0}^m \mathbb E\Big[ \mathbf 1_{\mathcal A_n} \cdot e_n \cdot \mathcal L_n \ \Big| \ B(m,j,x_1,\dots,x_m)\Big] \\
& =  \sum_{m=0}^\infty\sum_{(x_1,\dots,x_m)} \frac{\mathbb P(B(m,x_1,\dots,x_m))}{m+1}  \sum_{j=0}^m \mathbf 1\{x_k\neq x_j, \text{ for all }k>j\} \\
& \hspace{2cm} \times \mathbb E\Big[ \mathbf 1\{\mathcal T^{x_j}_- \cap \{0,x_1,\dots,x_m\}=\emptyset\} \cdot  \big(\sum_{\ell=0}^m \mathcal L_+(x_j,x_\ell) \big)\Big] \\  
& = \sum_{m=0}^\infty\sum_{(x_1,\dots,x_m)} \frac{\mathbb P(B(m,x_1,\dots,x_m))}{m+1} \sum_{\ell = 0}^m 
\sum_{j = 0}^m \mathbf 1\{x_k\neq x_j, \text{ for all }k>j\} \cdot \\
& \hspace{2cm}  \times \mathbb E\Big[ \mathbf 1\{\mathcal T^{x_j}_- \cap \{0,x_1,\dots,x_m\}=\emptyset\} \cdot  \mathcal L_+(x_j,x_\ell) \Big] \\   
& =  \sum_{m=0}^\infty\sum_{(x_1,\dots,x_m)} \mathbb P\big(B(m,x_1,\dots,x_m)\big)= 1, 
\end{align*}
using Lemma~\ref{lem.lastpassage} for the penultimate equality. 
\end{proof}

We now provide some first consequences of this lemma. 
Define for $n\in \mathbb N\cup\{\infty\}$, 
\begin{equation}\label{def.Un}
U_n =   \sum_{j=-\xi_n^l}^{\xi_n^r} \sum_{i\ge 0} d_i \cdot g(X_j,\widetilde X_i), \quad {\rm and} \quad 
Z_n = \sum_{j=-\xi_n^l}^{\xi_n^r} \sum_{i\ge 0} d_i \cdot \mathbf 1\{X_j=\widetilde X_i\}.
\end{equation}

\begin{cor}\label{cor.Lawler}
Assume $d\ge 5$. Then for all $n\ge 1$, 
$$\mathbb E\Big[ \mathbf 1_{\mathcal A_n} \cdot e_n \cdot (U_n-Z_n)\Big] = 1. $$
Moreover, if $d\ge 7$, then 
$$ \mathbb E\Big[ \mathbf 1_{\mathcal A_\infty} \cdot e_\infty \cdot (U_\infty-Z_\infty)\Big] = 1. $$ 
\end{cor}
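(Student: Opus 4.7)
The plan is to identify $U_n-Z_n$ with the conditional expectation of $\mathcal L_n$ given a natural $\sigma$-algebra containing all the data determining $\mathbf 1_{\mathcal A_n}$ and $e_n$, and then apply Lemma~\ref{lem.key.lawler} via the tower property. Concretely, I would introduce the $\sigma$-algebra $\mathcal G$ generated by the two-sided walk $X$, the cutoffs $\xi_n^l,\xi_n^r$, the past $\mathcal T_-^0$ (which already encodes the spine walk $\widetilde X$), and the numbers $d_i$ of right-side normal children of the spine vertex at level $i$, for every $i\ge 0$. By construction of $\mathcal T$, the quantities $\mathbf 1_{\mathcal A_n}$, $e_n$, $U_n$ and $Z_n$ are all $\mathcal G$-measurable, and conditionally on $\mathcal G$ the right-side normal subtrees hanging off the spine are independent critical Galton--Watson trees equipped with independent simple random walks on their edges.

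The core step is then to compute $\mathbb E[\mathcal L_+(0,y)\mid\mathcal G]$ for fixed $y$. Each of the $d_i$ right-side normal children of the spine at level $i$ sits at walk position $\widetilde X_i+\eta$ for an independent uniform step $\eta$ on the $2d$ neighbors of the origin; since a critical Galton--Watson tree has one expected vertex at every generation, the subtree rooted at such a child contributes in expectation $g(y-\widetilde X_i-\eta)$ visits to $y$. Averaging over $\eta$ using the discrete Laplacian identity $\tfrac{1}{2d}\sum_{\eta\sim 0}g(x-\eta)=g(x)-\delta_0(x)$, and then summing over the $d_i$ children at level $i$ and over $i\ge 0$, produces
$$\mathbb E[\mathcal L_+(0,y)\mid\mathcal G]=\sum_{i\ge 0}d_i\big(g(y-\widetilde X_i)-\mathbf 1\{y=\widetilde X_i\}\big).$$
Substituting $y=X_j$ and summing $j\in[-\xi_n^l,\xi_n^r]$ yields $\mathbb E[\mathcal L_n\mid\mathcal G]=U_n-Z_n$, and combining with Lemma~\ref{lem.key.lawler} through the tower property closes the first identity.

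For the $d\ge 7$ statement I would pass to the limit $n\to\infty$. On a coupling where $\xi_n^l,\xi_n^r\nearrow\infty$ almost surely, each of $\mathbf 1_{\mathcal A_n}$, $e_n$, $U_n$ and $Z_n$ converges a.s.\ to its $n=\infty$ counterpart, and $0\le U_n-Z_n=\mathbb E[\mathcal L_n\mid\mathcal G]\le U_\infty$ a.s. A short direct computation using independence of $X$ and $\widetilde X$, the summation formulas $\sum_{i\ge 0}\mathbb P(\widetilde X_i=w)=g(w)$ and $\sum_{j\in\mathbb Z}\mathbb P(X_j=w)=2g(w)-\delta_0(w)$, the bound $\mathbb E[d_i]\le C$, and the asymptotics $g(z)\sim a_d\|z\|^{2-d}$ and $G(z)\sim c_d\|z\|^{4-d}$ bounds $\mathbb E[U_\infty]$ by a constant multiple of $\sum_z g(z)G(z)$, which is finite precisely when $d>6$; dominated convergence then gives the second identity. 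The main obstacle I anticipate is cosmetic but delicate: carefully tracking the extra step $\eta$ between a spine vertex and the root of a subtree hanging off it, since it is exactly this step, combined with the discrete Laplacian identity, that produces the $-Z_n$ correction. Once the conditional expectation formula is in place, the finite-$n$ identity follows cleanly and the passage to $n=\infty$ is routine, with the dimension threshold $d>6$ appearing naturally as the integrability exponent of $g\cdot G$.
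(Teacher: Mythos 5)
Your proof is correct and follows essentially the same route as the paper: condition on a $\sigma$-algebra containing the spine data, the offspring counts $d_i$, and the relevant portion of $X$, identify $\mathbb E[\mathcal L_n\mid\mathcal G]=U_n-Z_n$ via the expected occupation of the hanging critical subtrees (your discrete-Laplacian averaging over the first step is just another derivation of the paper's $g(x,y)-\mathbf 1\{x=y\}$ formula), and pass to $n=\infty$ by dominated convergence using $\mathbb E[U_\infty]\lesssim\sum_u G(u)g(u)<\infty$ for $d\ge 7$. The only cosmetic difference is that you enlarge the conditioning $\sigma$-algebra to make $\mathbf 1_{\mathcal A_n}e_n$ measurable, whereas the paper keeps a smaller $\mathcal G_n$ and invokes conditional independence; both are valid.
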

\begin{proof}
The idea for the first identity is to start from the equation given by Lemma~\ref{lem.key.lawler}, and then condition with respect to the sigma-field
$$\mathcal G_n = \sigma\Big((d_i)_{i\ge 0}, (\widetilde X_i)_{i\ge 0}, (X_j)_{-\xi_n^l \le j\le \xi_n^r}\Big),$$ 
where $d_i$ is the number of offspring in the future of the $i$-th vertex on the spine, and we recall $(\widetilde X_i)_{i\ge 0}$ is the walk indexed by the spine of $\mathcal T$. 
The main observation is that conditionally on $\mathcal G_n$, 
the random variables $\mathcal L_n$ and $\mathbf 1_{\mathcal A_n}\cdot e_n$ are independent. Moreover, for each $i$, if we denote by $\mathcal T_i$ the adjoint critical tree hanging off the spine in the future at its $i$-th vertex, to which we remove the root, then for any $y\in \mathbb Z^d$, 
$$\mathbb E\Big[ \sum_{u\in \mathcal T_i } \mathbf 1\{S_u = y\}\mid \mathcal G_n\Big] = d_i\cdot \big(g(\widetilde X_i, y) - \mathbf 1\{\widetilde X_i = y\}\big),$$
since for a random walk indexed by a critical tree, starting from $x$, and conditioned by the fact that the root of the tree has exactly one offspring, the mean number of visits to a site $y$ is equal to $g(x,y)-\mathbf 1\{x=y\}$, if we do not count the starting point. Therefore, summing over $i\ge 0$, we get 
\begin{equation}\label{cond.Ln.Gn}
\mathbb E\big[ \mathcal L_n \mid \mathcal G_n\big] =  \sum_{i=0}^\infty \sum_{-\xi_n^l \le j\le \xi_n^r} d_i\cdot \big(g(\widetilde X_i, X_j) - \mathbf 1\{\widetilde X_i = X_j\}\big),
\end{equation}
which proves already the first claim of the corollary.

For the second claim note that by definition almost surely the sequence $\mathbf 1_{\mathcal A_n} \cdot e_n$ converges   
toward $\mathbf 1_{\mathcal A_\infty} \cdot e_\infty$, while $(U_n-Z_n)_{n\ge 0}$ converges toward $U_\infty-Z_\infty$. 
Moreover, for each $n\ge 1$, one has $0\le \mathbf 1_{\mathcal A_\infty} \cdot e_\infty\cdot (U_n-Z_n)\le U_\infty$, and if $d\ge 7$, by~\eqref{eq.g} and Lemma~\ref{lem.G}, 
$$\mathbb E[U_\infty]  \lesssim  \sum_{u,v\in \mathbb Z^d} g(u-v)g(u)g(v) =\sum_{u\in \mathbb Z^d} G(u)g(u)<\infty.$$ 
Thus the second claim follows from the first one and the dominated convergence theorem. 
\end{proof}


\section{Proof of Theorem \ref{m6}}\label{sec.6}
We assume in the whole section that $d=6$, and that $\mu$ has a finite third moment, which is in fact only needed for bounding the variance of $U_n$, defined in~\eqref{def.Un}.

\subsection{Concentration of the variable $U_n$}
We just state here our main estimates concerning the mean and variance of the variable $U_n$. 
The proof is postponed to Section~\ref{sec.proof.lem}, as it is a bit long and technical. 
\begin{prop}\label{prop.concentration}
One has as $n\to \infty$, 
$$\mathbb E[U_n]  \sim \frac{27\sigma^2}{2\pi^3} \cdot \log n, \quad \text{and}\quad 
{\rm Var}(U_n) = \mathcal O(\log n). $$   
\end{prop}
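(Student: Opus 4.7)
My plan is to condition on the walks $X$ and $\widetilde X$ and the truncation times $\xi_n^l,\xi_n^r$, and integrate out the offspring numbers $(d_i)_{i\ge 0}$ first. These are mutually independent and independent of everything else, and for $i\ge 1$ the $i$-th spine vertex has $\mu_{{\rm sb}}$-distributed offspring with one child chosen uniformly to continue the spine, so the expected number of non-spine children to the right of the spine equals $(\mathbb E_{\mu_{{\rm sb}}}[Y] - 1)/2 = \sigma^2/2$. Finiteness of ${\rm Var}(d_i)$ requires $\mathbb E_{\mu_{{\rm sb}}}[Y^2] = \mathbb E_\mu[Y^3] < \infty$, which is exactly the third moment hypothesis of Theorem~\ref{m6}.

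\textbf{Mean.} Using $\sum_{i\ge 0}\mathbb E[\mathbf 1\{\widetilde X_i = z\}] = g(z)$, Fubini gives $\mathbb E[\sum_i g(X_j - \widetilde X_i)] = (g*g)(X_j) = G(X_j)$, so that
$$\mathbb E[U_n] = \frac{\sigma^2}{2}\,\mathbb E\Big[\sum_{j=-\xi_n^l}^{\xi_n^r} G(X_j)\Big] + \mathcal O(1),$$
the error absorbing both the contribution of $d_0$ (whose law is $\mu$ rather than $\mu_{{\rm sb}}$) and the $i=0$ boundary term, each finite by transience. Next I would compute $\mathbb E[G(X_k)] = (g*g*p_k)(0)$ asymptotically, either by Fourier inversion with $1-\hat p(\xi)\sim \|\xi\|^2/12$ near the origin and the substitution $\eta=\xi\sqrt k$, or by combining the asymptotic $G(x)\sim 9\pi^{-3}/\|x\|^2$ from Lemma~\ref{lem.G} with the local CLT, to obtain $\mathbb E[G(X_k)]\sim \frac{27}{2\pi^3\,k}$. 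Summing against the geometric weight $(1-1/n)^k$ truncates the sum at $k\sim n$, and combining the two sides yields the claimed $\mathbb E[U_n]\sim \frac{27\sigma^2}{2\pi^3}\log n$.

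\textbf{Variance.} I would write $U_n = \sum_{i\ge 0} d_i\,W_{n,i}$ with $W_{n,i} = \sum_{j=-\xi_n^l}^{\xi_n^r} g(X_j - \widetilde X_i)$, and decompose
$${\rm Var}(U_n) = \mathbb E\big[{\rm Var}(U_n\mid X,\widetilde X,\xi)\big] + {\rm Var}\big(\mathbb E[U_n\mid X,\widetilde X,\xi]\big).$$
Independence of the $d_i$ makes the first piece equal to $\sum_i {\rm Var}(d_i)\,W_{n,i}^2$; expanding the square and integrating over $\widetilde X$ reduces it to bounding $\mathbb E\big[\sum_{j_1,j_2} H(X_{j_1},X_{j_2})\big]$, where $H(x,y) = \sum_z g(z)g(z-x)g(z-y)$ is a triple Green's function convolution. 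The second piece is the variance of $(\sigma^2/2)\sum_{i,j} g(X_j - \widetilde X_i)$, up to an $\mathcal O(W_{n,0})$ root contribution, and reduces after squaring and taking expectation to a quadruple Green's function correlator between the two independent walks. In both cases I would verify that the relevant convolutions in $d=6$ combined with the geometric truncation of $j$ at scale $n$ produce $\mathcal O(\log n)$.

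\textbf{Main obstacle.} The delicate part is the variance bound. Naive absolute-value estimates on the multiple Green's function sums yield only $\mathcal O((\log n)^2)$ in the critical dimension $d=6$; extracting the sharper $\mathcal O(\log n)$ requires exploiting that the extra factor of $g$ in the triple and quadruple convolutions pushes the spatial decay above critical, so that only the ``diagonal'' part of the double sum over $j_1,j_2$ contributes a logarithm while the off-diagonal part stays $\mathcal O(1)$. Carrying out these convolution estimates rigorously, and carefully isolating the diagonal contribution, is what the proof in Section~\ref{sec.proof.lem} is really about.
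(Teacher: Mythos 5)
Your treatment of the mean is correct and essentially the paper's: integrating out the $d_i$ (with $\mathbb E[d_i]=\sigma^2/2$ for $i\ge 1$, $\mathbb E[d_0]=1$) and then $\widetilde X$ reduces $\mathbb E[U_n]$ to $\frac{\sigma^2}{2}\mathbb E\big[\sum_j G(X_j)\big]+\mathcal O(1)$, and your evaluation $\mathbb E[G(X_k)]\sim 27/(2\pi^3k)$ agrees with the paper's computation via $\sum_{\|u\|\le\sqrt n}G(u)g(u)$. You also correctly identify the third-moment hypothesis as exactly what makes ${\rm Var}(d_i)$ finite. Your variance decomposition conditions on both walks, whereas the paper conditions only on $(X,\xi_n^l,\xi_n^r)$; that is a legitimate reorganization, and your first piece $\sum_i{\rm Var}(d_i)W_{n,i}^2$ does reduce to the triple convolution $H(X_{j_1},X_{j_2})$ and is in fact $\mathcal O(1)$ uniformly in $n$.

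The gap is in the second piece, ${\rm Var}\big(\sum_{i,j}g(X_j-\widetilde X_i)\big)$, which (after integrating out $\widetilde X$) contains ${\rm Var}(G_n)$ with $G_n=\sum_jG(X_j)$ — this is the real content of the proposition. Your proposed mechanism, that ``only the diagonal part of the double sum over $j_1,j_2$ contributes a logarithm while the off-diagonal part stays $\mathcal O(1)$,'' is not what happens: both $\mathbb E[G_n^2]$ and $\mathbb E[G_n]^2$ are of order $(\log n)^2$, and the off-diagonal part of $\mathbb E[G_n^2]$ is itself of order $(\log n)^2$, so no absolute-value convolution estimate — however much the extra factor of $g$ improves the spatial decay — can make it $\mathcal O(1)$. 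The bound ${\rm Var}(G_n)=\mathcal O(\log n)$ comes from a cancellation between two $(\log n)^2$ quantities, i.e.\ from showing that $\mathbb E[G(X_{j_2})\mid X_{j_1}]$ is close to $\mathbb E[G(X_{j_2})]$ when $j_2\gg j_1$. The paper points out that Lawler's explicit $d=4$ computation is unavailable here because $G$ is not harmonic in $d=6$, and instead decomposes the trajectory into dyadic annuli, shows each annular contribution has uniformly bounded second moment, and controls the cross-covariances via the harmonic-measure estimate $H_m(y,x)=H_m(0,x)(1+\mathcal O(2^{\ell-m}))$ of \eqref{Hm.bound}; even then the off-diagonal covariances sum to $\mathcal O(\log n)$, not $\mathcal O(1)$. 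The same dyadic scheme must then be run a second time on the spine walk $\widetilde X$ to handle the residual $\widetilde X$-fluctuations, which in your decomposition are also hidden inside the second piece. Without some such decorrelation input your plan stalls at $\mathcal O((\log n)^2)$, as you partly acknowledge, but the fix you sketch does not supply it.
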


\subsection{Rough bounds on the probability of the event $\mathcal A_n$} 
We prove here some rough upper bound on the probability of the event $\mathcal A_n$, as well as on the event 
 $$\mathcal B_n = \left\{\mathcal T^0_- \cap \mathcal R[0,\xi_n^r] = \emptyset\right\}.$$ 
\begin{lem}\label{cor.An}
One has 
$$\mathbb P(\mathcal A_n) \lesssim \frac{1}{\log n}, \quad {\rm and}\quad \mathbb P(\mathcal B_n) \lesssim \frac{1}{\sqrt{\log n}} . $$  
\end{lem}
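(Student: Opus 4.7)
The plan is to combine Lawler's identity (Corollary~\ref{cor.Lawler}) with the concentration given by Proposition~\ref{prop.concentration} to first control $\mathbb{P}(\mathcal{A}_n \cap \{e_n=1\})$, then remove the $e_n$-factor by a conditional strong-Markov argument, and finally deduce the $\mathcal{B}_n$-bound by Cauchy--Schwarz using the conditional independence of the two half-walks given the tree.

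Set $V_n := U_n - Z_n \ge 0$, so that Corollary~\ref{cor.Lawler} reads $\mathbb{E}[\mathbf 1_{\mathcal{A}_n} \cdot e_n \cdot V_n]=1$. Proposition~\ref{prop.concentration} gives $\mathbb{E}[U_n]\sim c\log n$ with $c=27\sigma^2/(2\pi^3)$ and $\mathrm{Var}(U_n)=\mathcal{O}(\log n)$, while the collision probabilities $\mathbb{P}(X_j=\widetilde X_i)\asymp(|j|+i+1)^{-3}$ are summable in $d=6$, so $\mathbb{E}[Z_n]=\mathcal{O}(1)$. Chebyshev for $U_n$ and Markov for $Z_n$ combine to give $\mathbb{P}(V_n< c\log n/2)=\mathcal{O}(1/\log n)$. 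From Lawler's identity,
\[
1 \ge (c\log n/2) \cdot \mathbb{P}\big(\mathcal{A}_n \cap \{e_n=1\} \cap \{V_n\ge c\log n/2\}\big) \ge (c\log n/2)\cdot \mathbb{P}(\mathcal{A}_n\cap \{e_n=1\}) - \mathcal{O}(1),
\]
so $\mathbb{P}(\mathcal{A}_n\cap\{e_n=1\}) \lesssim 1/\log n$.

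To remove the $e_n$-factor, I would condition on $\mathcal{T}$: the half-walks $(X_k)_{k\ge 0}$ and $(X_{-k})_{k\ge 0}$ are then independent simple random walks from $0$ killed at independent $\mathrm{Geom}(1/n)$-times, each required to avoid $F:=\mathcal{T}_-^0$ (which does not contain $0$). Writing $p(\mathcal{T})$ (resp.\ $q(\mathcal T)$) for the probability that the right walk avoids $F$ (resp.\ $F \cup \{0\}$), symmetry gives $\mathbb{P}(\mathcal{A}_n\mid\mathcal T)=p(\mathcal T)^2$ and $\mathbb{P}(\mathcal{A}_n\cap\{e_n=1\}\mid\mathcal T)=p(\mathcal T)q(\mathcal T)$. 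Decomposing the event ``right walk avoids $F$ but returns to $0$'' at the first return time $T_0':=\inf\{k\ge 1:X_k=0\}$ and applying strong Markov at $T_0'$ together with the geometric memorylessness of $\xi_n^r$,
\[
p(\mathcal T)-q(\mathcal T) = \mathbb{P}_0\big(T_0'\le T_F\wedge \xi_n^r \mid \mathcal T\big)\cdot p(\mathcal T) \le \mathbb{P}_0(T_0'<\infty)\cdot p(\mathcal T) = (1-1/g(0))\cdot p(\mathcal T),
\]
so $q(\mathcal T) \ge p(\mathcal T)/g(0)$ for every $\mathcal T$. Hence $\mathbb{P}(\mathcal{A}_n)=\mathbb{E}[p(\mathcal T)^2] \le g(0)\cdot \mathbb{E}[p(\mathcal T)q(\mathcal T)] = g(0)\cdot\mathbb{P}(\mathcal{A}_n\cap\{e_n=1\}) \lesssim 1/\log n$.

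For $\mathcal{B}_n$, the same conditional setup gives $\mathbb{P}(\mathcal{B}_n\mid\mathcal T)=p(\mathcal T)$, so Cauchy--Schwarz yields $\mathbb{P}(\mathcal{B}_n)^2=\mathbb{E}[p(\mathcal T)]^2 \le \mathbb{E}[p(\mathcal T)^2]=\mathbb{P}(\mathcal{A}_n)\lesssim 1/\log n$, producing the $1/\sqrt{\log n}$-bound. I expect the main obstacle to lie in the derivation of $q(\mathcal T)\ge p(\mathcal T)/g(0)$: one must justify the strong Markov application at $T_0'$ together with the geometric memorylessness in a way that yields a genuinely fresh killing horizon for the post-$T_0'$ walk, so that the same $p(\mathcal T)$ appears on both sides and a $\mathcal T$-independent constant (necessarily coming from the escape probability $1/g(0)$) is extracted, enabling the passage from $\mathcal{A}_n \cap \{e_n=1\}$ to $\mathcal{A}_n$.
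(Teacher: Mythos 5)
Your proof is correct. The first step (combining Corollary~\ref{cor.Lawler} with Proposition~\ref{prop.concentration}, Chebyshev for $U_n$ and Markov for $Z_n$, to get $\mathbb E[\mathbf 1_{\mathcal A_n}e_n]\lesssim 1/\log n$) and the last step (Cauchy--Schwarz via $\mathbb P(\mathcal B_n)^2\le\mathbb E[\mathbb P(\mathcal B_n\mid\mathcal T^0_-)^2]=\mathbb P(\mathcal A_n)$) are essentially identical to the paper's. Where you genuinely diverge is in removing the factor $e_n$. The paper decomposes the forward walk at its \emph{last} visit $\sigma$ to the origin, uses that the post-$\sigma$ walk is a walk conditioned to never return to $0$, and pays for this with a change of geometric parameter from $n$ to $\sqrt n$, which forces it to control the extra error terms $\mathbb P(\sigma\ge\sqrt n)$ and $\mathbb P(\xi_{\sqrt n}^r\ge\xi_n^r-\sqrt n)$. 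You instead condition on $\mathcal T^0_-$, factor $\mathbb P(\mathcal A_n\mid\mathcal T)=p(\mathcal T)^2$ and $\mathbb P(\mathcal A_n,e_n=1\mid\mathcal T)=p(\mathcal T)q(\mathcal T)$, and prove the pointwise bound $q(\mathcal T)\ge p(\mathcal T)/g(0)$ by splitting at the \emph{first} return to $0$: the step you flag as the main obstacle is in fact sound, since the event $\{T_0'=k,\ \mathcal R[0,k]\cap F=\emptyset\}$ is measurable with respect to $(X_0,\dots,X_k)$ and is independent of $\xi_n^r$, so on $\{\xi_n^r\ge k\}$ the memoryless property gives a fresh $\mathrm{Geom}(1/n)$ horizon for the post-$T_0'$ walk started at $0$, and the factor $p(\mathcal T)$ reappears exactly; the inequality $q\ge p/g(0)$ also holds trivially when $0\in F$ (both sides vanish), so your parenthetical claim that $\mathcal T^0_-$ avoids $0$ — which is not true almost surely — is harmless. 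Your route is arguably cleaner: it yields $\mathbb P(\mathcal A_n)\le g(0)\,\mathbb E[\mathbf 1_{\mathcal A_n}e_n]$ with an explicit constant and at the same geometric parameter $n$, whereas the paper's route costs a detour through $\xi_{\sqrt n}$; the paper's last-exit decomposition, on the other hand, is the more standard device and generalizes directly to other functionals of the post-$\sigma$ walk.
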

\begin{proof}
Recall the definitions of $U_n$ and $Z_n$ given in~\eqref{def.Un}, and let 
$$\mathcal E_n = \Big\{ |(U_n-Z_n) - \mathbb E[(U_n-Z_n)]|\ge \frac 12 \mathbb E[U_n]\Big\}.$$
Note that in any dimension $d\ge 5$, 
\begin{equation}\label{Z2}
\mathbb E[(Z_\infty)^2] = \sum_{x,y\in \mathbb Z^d} \sum_{\substack{i_1,i_2\ge 0\\ j_1,j_2\in \mathbb Z}} \mathbb P(\widetilde X_{i_1} = x,\widetilde X_{i_2} = y) \cdot \mathbb P(X_{j_1} = x,\widetilde X_{j_2} = y)  \lesssim  \sum_{x,y \in \mathbb Z^d} g(x)^2g(y-x)^2 \lesssim 1, 
\end{equation}
and hence by Proposition~\ref{prop.concentration} and Chebyshev's inequality, 
$$\mathbb P(\mathcal E_n) \lesssim \frac 1{\log n}.$$ 
Then using in addition Corollary~\ref{cor.Lawler}, we get 
\begin{equation}\label{bound.Anen}
\mathbb E\big[ \mathbf 1_{\mathcal A_n} \cdot e_n \big] \lesssim \mathbb P(\mathcal E_n) + \mathbb E\big[ \mathbf 1_{\mathcal A_n} \cdot e_n\cdot \mathbf 1_{\mathcal E_n^c}  \big] \lesssim \frac 1{\log n} +  \frac{\mathbb E\big[ \mathbf 1_{\mathcal A_n} \cdot e_n\cdot (U_n-Z_n)  \big] }{\mathbb E[U_n]} \lesssim \frac 1{\log n}. 
\end{equation}
We want now to remove $e_n$ from the expectation in the left-hand side. Denote by $\sigma$ the last visiting time of the origin by the walk $(X_k)_{k\ge 0}$. 
Let 
$$\mathcal A_n^\sigma = \big\{\mathcal T^0_- \cap \big(\mathcal R[-\xi_{\sqrt n}^l,0]\cup \mathcal R[\sigma,\sigma +  \xi_{\sqrt n}^r]\big) = \emptyset\big\}.$$ 
Since the law of the walk $X$ after time $\sigma$, is the law of a walk conditioned on not returning to the origin after time $0$, one has 
\begin{align*}
\mathbb P(\mathcal A_n)&  \le \mathbb P(\mathcal A_n^\sigma) + \mathbb P(\sigma+ \xi_{\sqrt n}^r > \xi_n^r) \le \mathbb E\big[\mathbf 1_{\mathcal A_{\sqrt n}} \cdot e_{\sqrt n}\big] + \mathbb P(\sigma+ \xi_{\sqrt n}^r > \xi_n^r) \\
& \stackrel{\eqref{bound.Anen}}{\lesssim} \frac 1{\log n} + \mathbb P(\sigma+ \xi_{\sqrt n}^r > \xi_n^r) \lesssim \frac 1{\log n}, 
\end{align*}
where the last bound follows from basic estimates. Indeed on one hand, 
$$\mathbb P(\sigma\ge  \sqrt n) \le \sum_{k\ge \sqrt n} \mathbb P(X_k = 0)\lesssim \sum_{k\ge \sqrt n} k^{-3} \lesssim \frac 1n,$$
and on the other hand, by standard properties of geometric random variables, 
$$\mathbb P(\xi_{\sqrt n}^r \ge \xi_n^r - \sqrt n) \le \mathbb P(\xi_{\sqrt n}^r \ge n^{3/4}) + \mathbb P(\xi_n^r \le 2n^{3/4}) \lesssim n^{-1/4}.$$ 
Thus so far we have proved the first inequality of the corollary. The second one follows by Cauchy-Schwarz inequality. Indeed, using also the independence between $\mathcal R[0,\xi_n^r]$ and $\mathcal R[-\xi_n^l,0]$, one deduces that  
$$\mathbb P(\mathcal B_n)^2 \le \mathbb E[\mathbb P(\mathcal B_n\mid \mathcal T^0_-)^2] = \mathbb P(\mathcal A_n).$$
\end{proof}


\subsection{Probability estimates of some non-intersection events}
Our main goal in this section is to prove estimates on some non-intersection events, which are simple consequences of Corollary~\ref{cor.An}. 
Denote by $\xi_n$ a Geometric random variable with parameter $1/n$, independent of everything else, and for $\varepsilon>0$, we denote by 
$x_\varepsilon$ the hitting point of $\partial B(0,1/\varepsilon)$ by the walk $\widetilde X$ indexed by the spine of $\mathcal T$. We start with the following estimate.
\begin{lem}\label{lem.nonintersection.eps}
For every $\varepsilon\in (0,1)$, there exists a constant $C(\varepsilon)>0$, such that for all $n\ge 2$,
$$ \mathbb P\Big(\mathcal T^{x_\varepsilon}_- \cap \mathcal R[0,\xi_n] = \emptyset\Big) \le \frac {C(\varepsilon)}{\sqrt{\log n}}. $$ 
\end{lem}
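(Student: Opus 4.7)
The plan is to follow the template of the proof of Lemma~\ref{cor.An}: perform a Cauchy-Schwarz reduction to a two-sided event, and then bound the two-sided event by a Lawler-type identity. The novelty compared to Lemma~\ref{cor.An} is that the tree-indexed walk is now rooted at the random point $x_\varepsilon$ rather than at the origin.

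\textbf{Step 1 (Cauchy-Schwarz reduction).} Introduce the two-sided analog
\[
\mathcal A_n^\varepsilon \;:=\; \bigl\{\mathcal T^{x_\varepsilon}_- \cap \mathcal R[-\xi_n^l, \xi_n^r] = \emptyset\bigr\},
\]
where $\xi_n^l, \xi_n^r$ are independent Geometric($1/n$) random variables and $(X_k)_{k\in\mathbb Z}$ is a two-sided simple random walk from the origin, independent of $\mathcal T$. Exactly as at the end of the proof of Lemma~\ref{cor.An}, the two halves $\mathcal R[0, \xi_n^r]$ and $\mathcal R[-\xi_n^l, 0]$ are conditionally independent given $\mathcal T^{x_\varepsilon}_-$ (and $x_\varepsilon$), and each is distributed as a one-sided walk from $0$ killed at an independent Geometric time. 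Cauchy-Schwarz then gives
\[
\mathbb P\bigl(\mathcal T^{x_\varepsilon}_- \cap \mathcal R[0, \xi_n] = \emptyset\bigr)^2 \;\le\; \mathbb P(\mathcal A_n^\varepsilon),
\]
so it is enough to prove $\mathbb P(\mathcal A_n^\varepsilon) \lesssim C(\varepsilon)^2/\log n$.

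\textbf{Step 2 (Shifted Lawler identity).} Conditioning on $x_\varepsilon$ and invoking shift invariance of the infinite invariant tree, the problem reduces to showing that, uniformly in $y$ of Euclidean norm $\asymp 1/\varepsilon$,
\[
\mathbb P\bigl(\mathcal T^y_- \cap \mathcal R[-\xi_n^l, \xi_n^r] = \emptyset\bigr) \;\lesssim\; \frac{C(\varepsilon)^2}{\log n}.
\]
I would mirror the derivation of Corollary~\ref{cor.Lawler} and the bound on $\mathbb P(\mathcal A_n)$ in Lemma~\ref{cor.An}, defining the natural translates
\[
U_n^y \;=\; \sum_{j=-\xi_n^l}^{\xi_n^r} \sum_{i\ge 0} d_i \, g\bigl(X_j,\, y + \widetilde X_i\bigr), \qquad Z_n^y \;=\; \sum_{j,i} d_i \, \mathbf 1\{X_j = y + \widetilde X_i\},
\]
together with an analog $e_n^y$ of $e_n$ that forces non-return of the walk to a suitably chosen meeting site. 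Mimicking the argument of Lemma~\ref{lem.key.lawler} would yield an identity of the form $\mathbb E[\mathbf 1_{\mathcal A_n^\varepsilon} \, e_n^y \, (U_n^y - Z_n^y)] = $ (an explicit $y$-dependent quantity). Since Proposition~\ref{prop.concentration} uses only translation invariance of the spine walk in its leading order, the same estimates $\mathbb E[U_n^y] \sim (27\sigma^2/2\pi^3) \log n$ and $\operatorname{Var}(U_n^y) = \mathcal O(\log n)$ remain valid. Chebyshev's inequality and the identity then give $\mathbb E[\mathbf 1_{\mathcal A_n^\varepsilon} \, e_n^y] \lesssim 1/\log n$ up to $\varepsilon$-dependent constants, and a last-visit decomposition---analogous to the removal of $e_n$ in the proof of Lemma~\ref{cor.An}---allows one to drop the $e_n^y$ factor at a further $\varepsilon$-dependent cost.

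\textbf{Main obstacle.} The delicate point is adapting the last-passage argument of Lemma~\ref{lem.key.lawler}: that identity crucially uses that both $\mathcal T^0$ and $X$ pass through the origin, which provides the natural pivot for the telescoping computation. In our setting the walk from $0$ does not visit $y = x_\varepsilon$ in any canonical way (in transient dimension $6$ the walk hits $y$ with probability only $\asymp g(y) \asymp \varepsilon^4$). I expect one must either condition on the walk hitting a neighborhood of $y$ or splice in an independent bridge from $0$ to $y$---each introducing a multiplicative factor of order $\varepsilon^{-c}$ in the constant $C(\varepsilon)$ but preserving the $1/\log n$ decay in $n$, because the leading asymptotics of $U_n^y$ still scale as $\log n$. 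Checking that this decoupling is legitimate, and that no unwanted $n$-dependent factors creep in, is the main technical work involved.
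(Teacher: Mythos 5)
There is a genuine gap, and you have in fact located it yourself: your Step 2 hinges on a ``shifted Lawler identity'' for a tree rooted at $y=x_\varepsilon$ and a walk through the origin, and you concede that making this work is ``the main technical work involved'' without carrying it out. The obstacle is structural, not merely technical. The identity of Lemma~\ref{lem.key.lawler} closes up to the value $1$ only because, for a fixed realization $A=\{x_0,\dots,x_m\}$ of the walk's range, the last-passage formula of Lemma~\ref{lem.lastpassage} is applied with starting points $x_\ell\in A$, where $\mathbb P(\mathcal T^{x_\ell}_-\cap A\neq\emptyset)=1$ trivially. When the tree is rooted at $y\notin A$, the same telescoping produces $\mathbb P(\mathcal T^{y}_-\cap A\neq\emptyset)$, a small quantity, and it is attached to the wrong functional: the expectation that appears involves trees re-rooted at the points of $A$, not at $y$, so it does not control $\mathbb P(\mathcal A_n^\varepsilon)$. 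Your suggested fixes (conditioning on the walk hitting a neighborhood of $y$, or splicing in a bridge from $0$ to $y$) are exactly where an uncontrolled $n$-dependence could enter, and you give no argument that it does not. As written, the proposal is a plan whose central step is missing.

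The paper's proof avoids this machinery entirely and is much softer: it compares the shifted event directly to the unshifted one. Let $\mathcal D_\varepsilon$ be the event that the spine walk up to its exit time $\widetilde\tau_\varepsilon$ of $B(0,1/\varepsilon)$ avoids $\mathcal R[0,\xi_n]$ and that no spine vertex up to that time has a normal offspring. Conditionally on $\{\widetilde X_{\widetilde\tau_\varepsilon}=x,\ \mathcal R[0,\xi_n]=\gamma\}$ with $x$ not disconnected from the origin by $\gamma$ inside $B(0,1/\varepsilon)$, the event $\mathcal D_\varepsilon$ has probability at least $c(\varepsilon)>0$, uniformly in $n$. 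On $\mathcal D_\varepsilon\cap\{\mathcal T^{x_\varepsilon}_-\cap\mathcal R[0,\xi_n]=\emptyset\}$ the entire past of $\mathcal T^0$ avoids $\mathcal R[0,\xi_n]$, whence
$\mathbb P(\mathcal T^{x_\varepsilon}_-\cap\mathcal R[0,\xi_n]=\emptyset)\le c(\varepsilon)^{-1}\,\mathbb P(\mathcal T^{0}_-\cap\mathcal R[0,\xi_n]=\emptyset)$,
and the right-hand side is $\mathcal O(1/\sqrt{\log n})$ by Lemma~\ref{cor.An}. If you want to salvage your write-up, replace Step 2 by such a forcing/comparison argument; note also that your Step 1 is then unnecessary, since the needed $1/\sqrt{\log n}$ bound for the one-sided event is already recorded in Lemma~\ref{cor.An} as the bound on $\mathbb P(\mathcal B_n)$.
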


\begin{proof}
Define for $\varepsilon \in (0,1)$, 
$$ \widetilde \tau_\varepsilon =  \inf\{k\ge 0 : \widetilde X_k \in \partial B(0,1/\varepsilon) \}, $$ 
where we recall that $\widetilde X$ is the random walk indexed by the spine of $\mathcal T^0$. In particular,  $x_\varepsilon  = \widetilde X_{\widetilde \tau_\varepsilon}$, by definition. Now we let 
$\mathcal D_\varepsilon$ be the event that the path $\widetilde X$ up to time $\widetilde \tau_\varepsilon$ avoids $\mathcal R[0,\xi_n]$ and that none of the vertices on the spine up to time $\widetilde \tau_\varepsilon$ has any normal offspring. Note that there exists a constant $c(\varepsilon)>0$, such that for any $x\in \partial B(0,1/\varepsilon)$ and any path $\gamma$ starting from the origin, for which 
$$\mathbb P(\widetilde X_{\widetilde \tau_\varepsilon} = x, \mathcal R[0,\xi_n]= \gamma, \mathcal T^0_-\cap \mathcal R[0,\xi_n]=\emptyset)>0,$$
one also has 
$$\mathbb P(\mathcal D_\varepsilon, \widetilde X_{\widetilde \tau_\varepsilon} = x, \mathcal R[0,\xi_n]=\gamma) \ge c(\varepsilon)\cdot \mathbb P( \widetilde X_{\widetilde \tau_\varepsilon} = x, \mathcal R[0,\xi_n]=\gamma),$$
since in particular in this case  $x$ cannot be disconnected from the origin within $B(0,1/\varepsilon)$ by the path $\gamma$.   
Then  one has 
\begin{align*}
\mathbb P( \mathcal T^0_-\cap \mathcal R[0,\xi_n]=\emptyset) & \ge \mathbb P(\mathcal T^{x_\varepsilon}_- \cap \mathcal R[0,\xi_n] = \emptyset, \mathcal D_\varepsilon) \\
& = \sum_{x\in \partial B(0,1/\varepsilon)} \sum_{\gamma} \mathbb P(\mathcal T^x_- \cap \mathcal R[0,\xi_n] = \emptyset, \widetilde X_{\widetilde \tau_\varepsilon} = x, \mathcal R[0,\xi_n]=\gamma, \mathcal D_\varepsilon)\\
& \ge c(\varepsilon) \cdot  \sum_{x\in \partial B(0,1/\varepsilon)} \sum_{\gamma} \mathbb P(\mathcal T^x_- \cap \mathcal R[0,\xi_n] = \emptyset, \widetilde X_{\widetilde \tau_\varepsilon} = x, \mathcal R[0,\xi_n]=\gamma) \\
& = c(\varepsilon) \cdot \mathbb P(\mathcal T^{x_\varepsilon}_- \cap \mathcal R[0,\xi_n] =\emptyset), 
\end{align*}
and we conclude the proof using Corollary~\ref{cor.An}. 
\end{proof} 

We prove now a second estimate. Recall the definitions of $\widetilde \tau_\varepsilon$ given in the proof of the previous lemma. Then denote by $\mathcal F^0_-[0,\widetilde \tau_\varepsilon]$ the forest consisting of all the subtrees in the past of $\mathcal T^0$ hanging of the spine from vertices with intrinsic label between $0$ and $\widetilde \tau_\varepsilon$. 

\begin{lem}\label{prop.nonintersection.eps}
There exists a constant $C>0$, such that for every $\varepsilon\in (0,1)$, 
$$\limsup_{n\to \infty} \ \mathbb P(\mathcal F_-^0[0,\widetilde \tau_\varepsilon] \cap \mathcal R[0,\xi_n] = \emptyset ) \le \frac{C}{\sqrt{\log (1/\varepsilon)}}. $$ 
\end{lem}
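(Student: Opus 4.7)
The plan is to mirror the argument used for Corollary~\ref{cor.An}, with the truncated forest $F := \mathcal F_-^0[0,\widetilde\tau_\varepsilon]$ playing the role of $\mathcal T^0_-$. First, I would reduce to the two-sided walk by Cauchy--Schwarz: conditioning on $F$ and using that $\mathcal R[0,\xi_n^r]$ and $\mathcal R[-\xi_n^l,0]$ are independent copies of $\mathcal R[0,\xi_n]$,
\[
\mathbb P\big(F \cap \mathcal R[0,\xi_n] = \emptyset\big)^2 \;\le\; \mathbb E\big[\mathbb P(F \cap \mathcal R[0,\xi_n] = \emptyset \mid F)^2\big] \;=\; \mathbb P\big(F \cap \mathcal R[-\xi_n^l,\xi_n^r] = \emptyset\big).
\]
It thus suffices to bound the two-sided non-intersection probability by $C/\log(1/\varepsilon)$ in the limsup as $n \to \infty$.

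For the two-sided bound, I would adapt the Lawler identity of Lemma~\ref{lem.key.lawler}. The last-passage formula (Lemma~\ref{lem.lastpassage}) can be applied to $F$ by isolating the ``last vertex of $F$'' visiting the walk range in the DFS order inherited from $\mathcal T^0$; the conditional independence of the past subtrees hanging off distinct spine vertices $v_1,\ldots,v_{\widetilde\tau_\varepsilon}$ lets one carry through the exchange of summations in the proof of Lemma~\ref{lem.key.lawler}. This should yield an identity of the shape
\[
\mathbb E\big[\mathbf 1\{F \cap \mathcal R[-\xi_n^l,\xi_n^r] = \emptyset\} \cdot e_n \cdot \mathcal L_n^{(\varepsilon)}\big] \;=\; 1,
\]
where $\mathcal L_n^{(\varepsilon)}$ is the truncated analog of $\mathcal L_n$, counting only future-tree visits hanging off the spine up to intrinsic label $\widetilde\tau_\varepsilon$. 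Conditioning on the spine and its future-offspring counts as in Corollary~\ref{cor.Lawler} then gives an inequality involving truncated analogs $U_n^{(\varepsilon)}$ and $Z_n^{(\varepsilon)}$ of the variables in~\eqref{def.Un}, with the inner sum over $i$ restricted to $i \le \widetilde\tau_\varepsilon$. A truncated version of Proposition~\ref{prop.concentration} should then establish
\[
\mathbb E[U_n^{(\varepsilon)}] \gtrsim \log(1/\varepsilon), \qquad \mathrm{Var}\,U_n^{(\varepsilon)} = \mathcal O\big(\log(1/\varepsilon)\big),
\]
as $n \to \infty$ with $\varepsilon$ fixed; the intuition is that, since $\widetilde X_i \in B(0,1/\varepsilon)$ for $i \le \widetilde\tau_\varepsilon \sim 1/\varepsilon^2$, the logarithmically divergent contribution to the mean saturates at scale $1/\varepsilon$ instead of at scale $\sqrt n$. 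A Chebyshev argument exactly as in Lemma~\ref{cor.An} then closes the two-sided bound.

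The main obstacle will be the truncated concentration estimate. Proposition~\ref{prop.concentration} is already the most delicate technical result in the paper, with its proof postponed to a separate section; redoing the mean and variance calculations with a spatial cutoff at $1/\varepsilon$ requires carefully splitting the double sums and tracking which contributions saturate at scale $1/\varepsilon$ and which remain negligible. A secondary---but easier---difficulty is to formulate precisely the truncated Lawler identity: since $F$ is not the past of any single tree, one needs to decompose according to which spine vertex $v_k$ (with $k\le \widetilde\tau_\varepsilon$) carries the last past-subtree vertex visiting the walk range, and to verify that the boundary terms arising from this truncation are indeed negligible.
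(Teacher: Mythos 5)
Your Cauchy--Schwarz reduction to the two-sided problem is fine, and the truncated concentration estimates you anticipate ($\mathbb E[U_n^{(\varepsilon)}]\asymp\log(1/\varepsilon)$ with variance of the same order) are true --- essentially the same computation appears in Remark~\ref{rem.stronglaw} for the quantity $V_n^L$. The genuine gap is the ``truncated Lawler identity'' $\mathbb E[\mathbf 1\{F\cap\mathcal R[-\xi_n^l,\xi_n^r]=\emptyset\}\cdot e_n\cdot\mathcal L_n^{(\varepsilon)}]=1$, which you dismiss as a secondary, easier difficulty; it is in fact the crux, and I do not believe it holds. The proof of Lemma~\ref{lem.key.lawler} rests on the last-passage formula of Lemma~\ref{lem.lastpassage}, i.e.\ on the invariance of the law of $\mathcal T$ under re-rooting at the last past vertex landing in the walk's range. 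The truncation at $\widetilde\tau_\varepsilon$ is defined relative to the \emph{original} root: after re-rooting at a vertex $v$ hanging off the $k$-th spine vertex, the new spine is the ray from $v$ to infinity, the relevant exit time becomes that of $B(S_v,1/\varepsilon)$ rather than $B(0,1/\varepsilon)$, and the truncated future weight $\mathcal L_n^{(\varepsilon)}$ does not correspond to the truncated past event for the re-rooted tree. The measure-preserving correspondence underlying the identity is therefore destroyed; at best one could hope for an approximate identity whose error terms would need to be controlled, and that is substantial unaddressed work, not a boundary-term formality.

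The paper's proof avoids all of this and is much shorter; you should compare. Since the non-intersection event is decreasing in the walk's range, one may replace $\xi_n$ by an independent Geometric variable $\xi_\varepsilon$ of parameter $\varepsilon$, at an additive cost $\mathbb P(\xi_\varepsilon>\xi_n)\lesssim\varepsilon$ once $n\ge\varepsilon^{-3}$. At that walk length the truncation of the tree is invisible: the tail forest $\mathcal F^0_-[\widetilde\tau_\varepsilon+1,\infty)$ emanates from $\partial B(0,1/\varepsilon)$, and a first-moment bound via \eqref{hit.G} gives $\mathbb P(\mathcal T^x_-\cap\mathcal R[0,\xi_\varepsilon]\neq\emptyset)\le\mathbb E[|\mathcal T^x_-\cap\mathcal R[0,\xi_\varepsilon]|]\lesssim\sum_u G(x-u)\,\mathbb P(u\in\mathcal R[0,\xi_\varepsilon])\lesssim\varepsilon$ uniformly over $x\in\partial B(0,1/\varepsilon)$. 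Hence $\mathbb P(\mathcal F_-^0[0,\widetilde\tau_\varepsilon]\cap\mathcal R[0,\xi_n]=\emptyset)\lesssim\mathbb P(\mathcal T^0_-\cap\mathcal R[0,\xi_\varepsilon]=\emptyset)+\varepsilon$, and the first term is exactly $\mathbb P(\mathcal B_{1/\varepsilon})\lesssim 1/\sqrt{\log(1/\varepsilon)}$ by Lemma~\ref{cor.An}, which is already proved. In short: rather than truncating the tree inside the Lawler machinery, shorten the walk to the scale of the truncation, observe that at that scale the full tree and the truncated forest are interchangeable up to an $\mathcal O(\varepsilon)$ error, and reuse the existing bound.
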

\begin{proof}
Let $\xi_\varepsilon$ be a Geometric random variable with parameter $\varepsilon$, independent of everything else. Note that for any $n\ge \varepsilon^{-3}$, 
$$\mathbb P(\xi_\varepsilon> \xi_n) \le \mathbb P(\xi_\varepsilon\ge \varepsilon^{-2}) + \mathbb P(\xi_n \le \varepsilon^{-2}) \lesssim \varepsilon,$$
and thus one can always replace $\xi_n$ by $\xi_\varepsilon$ in the statement of the proposition. Moreover, for any $x\in \partial B(0,1/\varepsilon)$, 
$$\mathbb P(\mathcal T^x_- \cap \mathcal R[0,\xi_\varepsilon]\neq \emptyset ) \le \mathbb E\Big[|\mathcal T^x_- \cap \mathcal R[0,\xi_\varepsilon]|\Big]\lesssim \sum_{u\in \mathbb Z^6} 
G(x-u) \cdot \mathbb P(u\in \mathcal R[0,\xi_\varepsilon]) \lesssim \varepsilon. $$
Therefore, for $n$ large enough,
$$\mathbb P(\mathcal F_-^0[0,\widetilde \tau_\varepsilon] \cap \mathcal R[0,\xi_n] = \emptyset ) \lesssim \mathbb P(\mathcal T^0_- \cap \mathcal R[0,\xi_\varepsilon] = \emptyset) + \varepsilon, $$
and then the result follows from Corollary~\ref{cor.An}. 
\end{proof}

\subsection{Asymptotic of the mean}
Here we compute the leading order term in the asymptotic of the expectation of the branching capacity of the range.  
\begin{prop}\label{prop.mean}
One has 
$$\mathbb E\big[\mathbf 1_{\mathcal A_n} \cdot e_n\big] \sim \frac{2\pi^3}{27\sigma^2}\cdot \frac 1{\log n},$$
and 
$$\mathbb E\left[\bca(\mathcal R_n)\right] \sim \frac{2\pi^3}{27\sigma^2} \cdot \frac{n}{\log n}. $$  
\end{prop}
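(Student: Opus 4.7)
The plan is to first prove the asymptotic $\mathbb E[\mathbf 1_{\mathcal A_n} e_n] \sim \frac{2\pi^3}{27\sigma^2 \log n}$, and then deduce the asymptotic for $\mathbb E[\bca(\mathcal R_n)]$ via an exact identity relating it to $\mathbb E[\mathbf 1_{\mathcal A_n} e_n]$ together with a standard de-Poissonization. Setting $R_n := U_n - Z_n$, Corollary~\ref{cor.Lawler} gives $\mathbb E[\mathbf 1_{\mathcal A_n} e_n R_n] = 1$, and Proposition~\ref{prop.concentration} together with \eqref{Z2} yields $\mathbb E R_n \sim \tfrac{27\sigma^2}{2\pi^3}\log n$ with $\mathrm{Var}(R_n) = \mathcal O(\log n)$. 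Heuristically, since $R_n$ concentrates around its mean, the identity forces $\mathbb E[\mathbf 1_{\mathcal A_n} e_n] \sim 1/\mathbb E R_n$; the main obstacle is to show that the conditional expectation $\mathbb E[R_n \mid \mathcal A_n, e_n = 1]$ is not biased away from $\mathbb E R_n$ by the rare non-intersection event.

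I would overcome this obstacle by a two-scale decoupling of the tree. Writing $\widetilde\tau_\varepsilon$ for the hitting time of $\partial B(0, 1/\varepsilon)$ by the spine walk and $x_\varepsilon := \widetilde X_{\widetilde\tau_\varepsilon}$, decompose $R_n = R_n^{<\varepsilon} + R_n^{>\varepsilon}$ according to whether the spine vertex has intrinsic label $\le \widetilde\tau_\varepsilon$ or not. Lemmas~\ref{lem.nonintersection.eps} and~\ref{prop.nonintersection.eps}, combined with moment computations in the spirit of~\eqref{Z2}, control the early contribution to the Lawler identity and show that $\mathbb E[\mathbf 1_{\mathcal A_n} e_n R_n^{<\varepsilon}]$ is negligible compared to $1$ as $\varepsilon \to 0$. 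The key structural point for the late part is that conditionally on the tree and walk up to spine-time $\widetilde\tau_\varepsilon$, the continuation of the tree is again an independent invariant tree-indexed walk starting at $x_\varepsilon$; since $\|x_\varepsilon\| = 1/\varepsilon$ is large, $R_n^{>\varepsilon}$ is approximately independent of $\mathbf 1_{\mathcal A_n} e_n$, and combined with $\mathbb E R_n^{>\varepsilon}/\mathbb E R_n \to 1$ and concentration of $R_n^{>\varepsilon}$ around its mean (both provable by computations parallel to those underlying Proposition~\ref{prop.concentration}), this yields
\begin{equation*}
1 \;=\; \mathbb E[\mathbf 1_{\mathcal A_n} e_n R_n] \;=\; (1 + o_\varepsilon(1) + o_n(1))\, \mathbb E[\mathbf 1_{\mathcal A_n} e_n]\, \mathbb E R_n.
\end{equation*}
Sending first $n \to \infty$ and then $\varepsilon \to 0$ produces the matching upper and lower bounds. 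The rough estimate $\mathbb P(\mathcal A_n) \lesssim 1/\log n$ from Lemma~\ref{cor.An} is used throughout to absorb error terms.

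Finally, a last-visit decomposition relates $\mathbb E[\bca(\mathcal R_n)]$ to $\mathbb E[\mathbf 1_{\mathcal A_n} e_n]$ exactly. Writing $\bca(\mathcal R_n) = \sum_{y \in \mathcal R_n} e_{\mathcal R_n}(y)$ and $\mathbf 1\{y \in \mathcal R_n\} = \sum_{k=0}^n \mathbf 1\{X_k = y,\, X_j \neq y \text{ for } k < j \le n\}$, then summing over $y$ and translating the walk by $-X_k$ (with time-reversal of the past piece) expresses $\mathbb E[\bca(\mathcal R_n)]$ as $\sum_{k=0}^n F(k, n-k)$, where $F(k, m)$ denotes the joint probability that the future walk of length $m$ does not return to $0$ and that $\mathcal T^0_-$ avoids two independent walks of deterministic lengths $k$ and $m$; the same argument with random geometric lengths gives $\mathbb E[\mathbf 1_{\mathcal A_n} e_n] = \mathbb E[F(\xi_n^l, \xi_n^r)]$. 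Interchanging sums produces the exact identity $\mathbb E[\bca(\mathcal R_N)] = n \cdot \mathbb E[\mathbf 1_{\mathcal A_n} e_n]$ with $N$ geometric of parameter $1/n$, and a standard de-Poissonization using monotonicity of $n \mapsto \mathbb E[\bca(\mathcal R_n)]$ and the just-obtained asymptotic applied at nearby scales then gives $\mathbb E[\bca(\mathcal R_n)] \sim \mathbb E[\bca(\mathcal R_N)] = n \mathbb E[\mathbf 1_{\mathcal A_n} e_n]$, completing the proof.
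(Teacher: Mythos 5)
Your skeleton is the paper's: Lawler's identity (Corollary~\ref{cor.Lawler}), removal of $Z_n$ via~\eqref{Z2} and Lemma~\ref{cor.An}, the concentration estimates of Proposition~\ref{prop.concentration}, a two-scale split of the tree at the spine time $\widetilde\tau_\varepsilon$ controlled by Lemmas~\ref{lem.nonintersection.eps} and~\ref{prop.nonintersection.eps}, and a geometrization/de-geometrization step for the second claim (your exact identity $\mathbb E[\bca(\mathcal R_N)]=n\,\mathbb E[\mathbf 1_{\mathcal A_n}e_n]$ for $N$ geometric of parameter $1/n$ is correct, and the paper's two-sided comparison at scales $n(\log n)^{\pm 2}$ accomplishes the same thing).

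There is, however, a genuine gap in the justification of the key decoupling step. You claim that $R_n^{>\varepsilon}$ is approximately independent of $\mathbf 1_{\mathcal A_n}e_n$ \emph{because} $\|x_\varepsilon\|=1/\varepsilon$ is large. That is not the mechanism, and as stated it is false: the continuation of the tree beyond $\widetilde\tau_\varepsilon$ determines both the late spine and offspring data entering $R_n^{>\varepsilon}$ \emph{and} the bulk of the event $\mathcal A_n$ (its past subtrees must avoid $\mathcal R[-\xi_n^l,\xi_n^r]$, whose diameter is $\asymp\sqrt n\gg 1/\varepsilon$, so the restarted tree's avoidance is itself an event of probability $\asymp 1/\log n$); moreover both quantities depend on the same two-sided walk. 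Note also that concentration alone cannot close the argument: Chebyshev gives a deviation probability $\lesssim 1/\log n$, and Cauchy--Schwarz against $|R_n-\mathbb E R_n|$ then only yields an $O(1)$ error in the Lawler identity. What actually works, and what the two lemmas you cite are built for, is a one-sided replacement of $\mathcal A_n$ by a larger event that is \emph{exactly} independent (after a Harnack argument for the spine's exit position) of the deviation being controlled: for the late-part deviation one bounds $\mathbf 1_{\mathcal A_n}$ by the indicator that the early forest $\mathcal F^0_-[0,\widetilde\tau_{\cdot}]$ avoids only the \emph{past} half $\mathcal R[-\xi_n^l,0]$ of the walk (probability $\lesssim 1/\sqrt{\log(1/\varepsilon')}$ by Lemma~\ref{prop.nonintersection.eps}), and for the early part one bounds it by the indicator that the restarted tree $\mathcal T^{y_\varepsilon}_-$ avoids $\mathcal R[-\xi_n^l,0]$ (probability $\lesssim 1/\sqrt{\log n}$ by Lemma~\ref{lem.nonintersection.eps}). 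For this factorization one must first split $U_n$ into the contributions $U_n^{\pm}$ of the future and past halves of the two-sided walk and treat each by symmetry; without that split the late deviation event still involves $\mathcal R[-\xi_n^l,0]$ and the independence fails. With these two corrections (the correct reason for decoupling, and the $\pm$ split) your outline coincides with the paper's proof.
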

\begin{proof}
Let us start with the first claim of the proposition. 
By Lemma~\ref{cor.An} and~\eqref{Z2}, one has using also Cauchy-Schwarz inequality, 
$$
\mathbb E[\mathbf 1_{\mathcal A_n} \cdot e_n \cdot Z_n] \le \mathbb E[\mathbf 1_{\mathcal A_n} \cdot e_n]^{1/2} \cdot \mathbb E[Z_\infty^2]^{1/2} \lesssim \frac{1}{\sqrt{\log n}}. 
$$
Therefore Corollary~\ref{cor.Lawler} gives 
\begin{equation}\label{mean.d6.1} 
\mathbb E\Big[ \mathbf 1_{\mathcal A_n} \cdot e_n \cdot U_n\Big] = 1+\mathcal O\big(\frac 1{\sqrt{\log n}}\big).  
\end{equation}
Then letting $\overline U_n = U_n - \mathbb E[U_n]$, and using Proposition~\ref{prop.concentration}, we get 
$$\mathbb E\big[\mathbf 1_{\mathcal A_n} \cdot e_n\big] = \frac 1{\mathbb E[U_n]} - \frac{\mathbb E[\mathbf 1_{\mathcal A_n} \cdot e_n \cdot \overline U_n]}{\mathbb E[U_n]} 
+ \mathcal O\big(\frac 1{(\log n)^{3/2}}\big),$$ 
and it amounts now to bound the second term on the right hand side. 
For $\varepsilon>0$, let 
$$Y_\varepsilon = \mathbf 1\{|U_n-\mathbb E[U_n]|>\varepsilon \cdot \mathbb E[U_n]\}.$$ 
One has 
$$\mathbb E[\mathbf 1_{\mathcal A_n} \cdot e_n \cdot |\overline U_n|] \le \varepsilon\cdot \mathbb E[U_n] \cdot \mathbb E[\mathbf 1_{\mathcal A_n} \cdot e_n]
+ \mathbb E[\mathbf 1_{\mathcal A_n}  \cdot Y_\varepsilon \cdot |\overline U_n|], $$
and again it suffices to bound the second term on the right-hand side. By Cauchy-Schwarz inequality and Proposition~\ref{prop.concentration}, we have 
$$ \mathbb E[\mathbf 1_{\mathcal A_n}  \cdot Y_\varepsilon \cdot |\overline U_n|]\lesssim \mathbb E[\mathbf 1_{\mathcal A_n}  \cdot Y_\varepsilon]^{1/2} \cdot \sqrt{\log n}.$$
Now define 
$$U_n^+ =  \sum_{j=0}^{\xi_n^r} \varphi_{\widetilde X}(X_j), \quad {\rm and}\quad U_n^- =  \sum_{j=-\xi_n^\ell}^{-1} \varphi_{\widetilde X}(X_j).$$ 
Let also  
$$Y_\varepsilon^+ = \mathbf 1\{|U_n^+-\mathbb E[U_n^+]|>\varepsilon \cdot \mathbb E[U_n^+]\},\quad {\rm and}\quad Y_\varepsilon^- = \mathbf 1\{|U_n^--\mathbb E[U_n^-]|>\varepsilon \cdot \mathbb E[U_n^-]\}.$$
One has 
$$\mathbb  E[\mathbf 1_{\mathcal A_n}  \cdot Y_\varepsilon] \le \mathbb  E[\mathbf 1_{\mathcal A_n}  \cdot Y_\varepsilon^+]  + \mathbb  E[\mathbf 1_{\mathcal A_n}  \cdot Y_\varepsilon^-], $$ 
and by symmetry it suffices to bound the term $\mathbb  E[\mathbf 1_{\mathcal A_n}  \cdot Y_\varepsilon^+]$. We further decompose it into two terms as follows. 
Recall the definition of $\widetilde \tau_\varepsilon$ from the proof of Lemma~\ref{lem.nonintersection.eps},  and let 
$$U_n^{\varepsilon} = \sum_{j=0}^{\xi_n^r} \sum_{i=0}^{\widetilde \tau_{\varepsilon'}} d_i \cdot g(X_j,\widetilde X_i), \quad {\rm and}\quad \widetilde U_n^\varepsilon = \sum_{j=0}^{\xi_n^r} \sum_{i=\widetilde \tau_{\varepsilon}+1}^\infty d_i \cdot g(X_j,\widetilde X_i).$$
Now set $\varepsilon' = \exp(-\varepsilon^{-6})$, and define  
$$Y_\varepsilon^1 = \mathbf 1\Big\{ \big|U_n^{\varepsilon'}-\mathbb E[U_n^{\varepsilon'}]\big|>\frac {\varepsilon}{2} \cdot \mathbb E[U_n^+]\Big\}, \quad {\rm and} \quad 
Y_\varepsilon^2 = \mathbf 1\Big\{ \big|\widetilde U_n^{\varepsilon'}-\mathbb E[\widetilde U_n^{\varepsilon'}]\big|>\frac {\varepsilon}{2} \cdot \mathbb E[U_n^+]\Big\}. $$ 
One has 
$$\mathbb  E[\mathbf 1_{\mathcal A_n}  \cdot Y_\varepsilon^+]\le \mathbb  E[\mathbf 1_{\mathcal A_n}  \cdot Y_\varepsilon^1]+ \mathbb  E[\mathbf 1_{\mathcal A_n}  \cdot Y_\varepsilon^2]. $$ 
Moreover, letting $y_\varepsilon = \widetilde X_{\widetilde \tau_{\varepsilon'/4}}$, we can write using independence, 
\begin{align*}
\mathbb  E[\mathbf 1_{\mathcal A_n}  \cdot Y_\varepsilon^1]& \le \mathbb  E\Big[Y_\varepsilon^1\cdot \mathbf 1\big\{\mathcal T^{y_\varepsilon}_- \cap \mathcal R[-\xi_n^l,0]=\emptyset\big\} \Big]\\
& = \sum_{x\in \partial B(0,1/\varepsilon')} \sum_{y\in \partial B(0,4/\varepsilon')} \mathbb  E[Y_\varepsilon^1\cdot \mathbf 1\{\widetilde X_{\widetilde \tau_{\varepsilon'}}= x\}]
\cdot \mathbb P_x(\widetilde X_{\widetilde \tau_{\varepsilon'/4}}=y ) \cdot \mathbb P(\mathcal T^y_- \cap \mathcal R[-\xi_n^l,0]=\emptyset) \\
& \lesssim  \sum_{x\in \partial B(0,1/\varepsilon')} \sum_{y\in \partial B(0,4/\varepsilon')} \mathbb  E[Y_\varepsilon^1\cdot \mathbf 1\{\widetilde X_{\widetilde \tau_{\varepsilon'}}= x\}]
\cdot \mathbb P(\widetilde X_{\widetilde \tau_{\varepsilon'/4}}=y ) \cdot \mathbb P(\mathcal T^y_- \cap \mathcal R[-\xi_n^l,0]=\emptyset)\\
& = \mathbb E[Y_\varepsilon^1] \cdot \mathbb P(\mathcal T^{y_\varepsilon}_- \cap \mathcal R[-\xi_n^l,0]=\emptyset), 
\end{align*}
using also Harnack's inequality at the third line, see e.g.~\cite[Lemma 6.3.7]{LL10}. Now the same argument as the one used for proving Proposition~\ref{prop.concentration} shows that 
$\mathbb E[Y_\varepsilon^1] \lesssim \tfrac{1}{\varepsilon^2\log n}$. Using additionally Lemma~\ref{lem.nonintersection.eps}, then yields
$$ \mathbb  E[\mathbf 1_{\mathcal A_n}  \cdot Y_\varepsilon^1] \lesssim \frac {C(\varepsilon'/4)}{\varepsilon^2(\log n)^{3/2}}. $$  
Similarly one has, with the notation from Lemma~\ref{prop.nonintersection.eps}, and using this result,
\begin{align*}
\mathbb  E[\mathbf 1_{\mathcal A_n}  \cdot Y_\varepsilon^2]& \le \mathbb  E\Big[Y_\varepsilon^2\cdot \mathbf 1\{ \mathcal F^0_-[0,\widetilde \tau_{4\varepsilon'}] \cap \mathcal R[-\xi_n^\ell,0]=\emptyset\}\Big] \\
& \lesssim \mathbb  E[Y_\varepsilon^2]\cdot \mathbb P\big(\mathcal F^0_-[0,\widetilde \tau_{4\varepsilon'}] \cap \mathcal R[-\xi_n^\ell,0]=\emptyset\big) \\
 & \lesssim \frac 1{\sqrt{\log (1/\varepsilon')}} \cdot \frac 1{\varepsilon^2\log n} = \frac {\varepsilon}{\log n}. 
\end{align*}
Since this holds for all $\varepsilon \in(0,1)$, combining all the previous estimates proves that  
$$\mathbb  E\big[\mathbf 1_{\mathcal A_n} \cdot e_n\big] = \frac 1{\mathbb E[U_n]} + o\big(\frac 1{\log n}\big),$$ 
concluding the proof of the first part of the proposition, thanks again to Proposition~\ref{prop.concentration}.

For the second part we use first that 
$$\mathbb E[\bca(\mathcal R_n)] = \sum_{k=0}^n \mathbb P\big(\mathcal T^0_-\cap \mathcal R[-k,n-k]=\emptyset, 0\notin \mathcal R[1,n-k]\big). $$ 
Then the lower bound follows from the first part, by writing 
$$\mathbb E[\bca(\mathcal R_n)] \ge  n\cdot \mathbb P\big(\mathcal T^0_-\cap \mathcal R[-n,n]=\emptyset, 0\notin \mathcal R[1,n]\big)\ge n\cdot \mathbb E[\mathbf 1_{\mathcal A_{n(\log n)^2}}\cdot e_{n(\log n)^2}] - o(\frac n{\log n}),$$
using for the last inequality that for a Geometric random variable $\xi$ with parameter $p$, one has 
$\mathbb P(\xi \le n ) \le  p(n+1)$.  For the upper bound we write similarly, with $n' = n/(\log n)^2$, 
\begin{align*}
\mathbb E[\bca(\mathcal R_n)]  & \le 2n' + (n-2n') \cdot \mathbb P\big(\mathcal T^0_-\cap \mathcal R[-n',n']=\emptyset, 0\notin \mathcal R[1,n']\big)\\
& \le 2n' + (n-2n')\cdot \mathbb E[\mathbf 1_{\mathcal A_{n/(\log n)^4}}\cdot e_{n/(\log n)^4}] - o(\frac n{\log n}),  
\end{align*}
using this time that for a Geometric random variable $\xi$ with parameter $p$, one has $\mathbb P(\xi > n') \le (1-p)^{n'}$. This concludes the proof of the proposition. 
\end{proof}


\subsection{Conclusion}
We are now in position to conclude the proof of the weak law of large numbers in dimension $6$. 

\begin{proof}[Proof of Theorem~\ref{m6}]
To conclude the proof it suffices to show that 
\begin{equation}\label{Variance}
{\rm Var}(\bca(\mathcal R_n)) = o(\mathbb E[\bca(\mathcal R_n)]^2). 
\end{equation}
For this one can follow verbatim the proof of Chang~\cite{Chang17}, which we briefly recall for reader's convenience. 
Note first that by Lemma~\ref{lem.hit}, one has for any $z$ with $\|z\|\ge n^6$, 
$$\frac{\sigma^2}{2}\cdot \bca(\mathcal R_n) = \frac{\mathbb P(\mathcal T^z_- \cap \mathcal R_n\neq \emptyset \mid \mathcal R_n)}{G(z)} + \mathcal O\big(\frac 1n\big).$$ 
As a consequence, 
\begin{equation}\label{expected.mean}
\frac{\sigma^2}{2}\cdot \mathbb E[\bca(\mathcal R_n)] = \frac{\mathbb P(\mathcal T^z_- \cap \mathcal R_n\neq \emptyset)}{G(z)} + \mathcal O\big(\frac 1n\big),
\end{equation}
and with $\widetilde {\mathcal T}^z_-$ an independent copy of $\mathcal T^z_-$,
\begin{equation}\label{expected.square}
\frac{\sigma^4}{4}\cdot \mathbb E\big[\bca(\mathcal R_n)^2\big] = \frac{\mathbb P(\mathcal T^z_- \cap \mathcal R_n\neq \emptyset , \widetilde{\mathcal T}^z_-\cap \mathcal R_n\neq \emptyset)}{G(z)^2} + \mathcal O(1), 
\end{equation}
using that $\bca(\mathcal R_n) \le n+1$ to show that the error term is well $\mathcal O(1)$.  
Next, we define
$$\tau_1=\inf\{k: X_k\in \mathcal T^z_-\}, \quad {\rm and} \quad  \tau_2=\inf\{k: X_k\in \widetilde{\mathcal T}^z_-\}.$$
One has by symmetry, 
\begin{equation}\label{numerator.square}
\mathbb P(\mathcal T^z_- \cap \mathcal R_n\neq \emptyset , \widetilde{\mathcal T}^z_-\cap \mathcal R_n\neq \emptyset)\le 2\mathbb P(\tau_1\le \tau_2\le n). 
\end{equation}
Then by using the Markov property for the walk $X$, we get 
\begin{equation}\label{tau1tau2}
\mathbb P(\tau_1\le \tau_2\le n) = \mathbb E\left[\mathbf 1\{\tau_1\le n\} \cdot \mathbb P_{X_{\tau_1}}(\mathcal T^z_- \cap \mathcal R[0,n-\tau_1]\neq \emptyset\mid \tau_1)\right]. 
\end{equation} 
Letting $k(n)= (\sigma^2/2)\cdot\mathbb E[\bca(\mathcal R_n)]$, one has using also Lemma~\ref{lem.G.grad}, 
$$\mathbb P_{X_{\tau_1}}(\mathcal T^z_- \cap \mathcal R[0,n-\tau_1]\neq \emptyset\mid \tau_1) = G(z) \cdot k(n-\tau_1) + \mathcal O\big( \frac{G(z)}{n}\big). $$
Injecting this in~\eqref{tau1tau2}, and using~\eqref{expected.mean} gives 
\begin{equation}\label{tau1tau2bis}
\mathbb P(\tau_1\le \tau_2\le n) = G(z)\cdot \mathbb E\left[\mathbf 1\{\tau_1\le n\} \cdot k(n-\tau_1)\right] +\mathcal O\big(G(z)^2\big).
\end{equation}
The final step is to show that conditionally on the event $\{\tau_1\le n\}$, the random variable $\tau_1/n$ converges in law to a uniform random variable in $[0,1]$, as $n\to \infty$ (uniformly in $z$ with $\|z\|\ge n^3$). For this one can write using~\eqref{expected.mean} and Proposition~\ref{prop.mean}, that for any $s\in (0,1)$,  
$$\mathbb P(\tau_1\le ns\mid \tau_1\le n) = \frac{\mathbb P(\mathcal T^z_- \cap \mathcal R_{ns}\neq \emptyset)}{\mathbb P(\mathcal T^z_- \cap \mathcal R_n\neq \emptyset)} 
= \frac{k(ns)+\mathcal O(1)}{k(n) + \mathcal O(1)} = s\cdot (1 + o(1)). $$
Using this and~\eqref{tau1tau2bis}, as well as~\eqref{expected.mean} and Proposition~\ref{prop.mean}, yields
$$\mathbb P(\tau_1\le \tau_2\le n) = G(z)\cdot \mathbb E\left[\mathbf 1\{\tau_1\le n\} \cdot k(n-\tau_1)\right] = G(z)^2 \cdot k(n)^2\cdot (1+o(1)) + \mathcal O(G(z)^2),$$ 
and plugging this into~\eqref{numerator.square} and~\eqref{expected.square} concludes the proof of~\eqref{Variance}, and thus the proof of Theorem~\ref{m6} as well. 
\end{proof} 

\begin{rem}[Sketch of proof of the strong law of large numbers]\label{rem.stronglaw}
\emph{We now briefly explain how one can strengthen the weak law into a strong law of large numbers. The main point is to obtain a quantitative bound on the second order term in the asymptotic expansion of the expected branching capacity of the range. More precisely one needs a bound of the form 
\begin{equation}\label{secondorder}
\mathbb E[\bca(\mathcal R_n)] = \frac{2\pi^3}{27\sigma^2} \cdot \frac{n}{\log n} + \mathcal O\big(\frac{n}{(\log n)^{1+\delta}}\big), 
\end{equation}
for some $\delta>0$. Indeed, once this is obtained, then a careful look at the previous proof above reveals that this would yield a better bound on the variance, namely 
$$\frac{{\rm Var}(\bca(\mathcal R_n))}{\mathbb E[\bca (\mathcal R_n)]^2}  = \mathcal O\big(\frac 1{(\log n)^\delta}\big). $$
In turn, once such bound is known, then one can follow exactly the same proof as in~\cite{ASS19} to deduce almost sure convergence. Roughly, using a dyadic decomposition scheme, one can express the branching capacity of the range as a sum of independent and (almost) identically distributed terms, plus a sum of so-called crossed terms, whose variance is controlled. Hence, one has for any $L\ge 1$ a decomposition of the form  
$$\bca(\mathcal R_n) = \sum_{i=0}^{2^L-1} \bca (\mathcal R_n^{(i,L)}) + \sum_{\ell = 1}^L \sum_{j=0}^{2^{\ell-2}} \chi(\mathcal R_n^{(2j,\ell)},\mathcal R_n^{(2j+1,\ell)}), $$ 
where $\chi(A,B)$ is defined in the introduction, and $\mathcal R_n^{(j,\ell)} = \mathcal R[j \frac{n}{2^\ell},(j+1)\frac{n}{2^\ell}]$. Here, as we take $L$ of order $\log \log n$, the main contribution comes from the first sum, 
the second sum is shown to have a small variance, thanks to the previous bound. As a consequence one can deduce almost sure convergence of $\tfrac{\log n}{n}\cdot \bca(\mathcal R_n)$  along a subsequence of the form 
$a_n = \exp(n^{1-\delta'})$, for some $\delta'\in(0,1)$, just using Chebyshev's inequality and the Borel-Cantelli lemma. Finally using that $a_{n+1}/a_n$ converges to one, and monotonicity of the branching capacity, one easily extends this convergence along a subsequence into an almost sure convergence for the initial sequence. }

\emph{Thus the whole proof boils down to proving~\eqref{secondorder}, for some $\delta>0$. For this one can follow roughly the same strategy as in the proof of Proposition~\ref{prop.mean}, 
but with a different truncation of the variable $U_n^+$. In fact reproducing the same first steps, one can see that 
the main problem is to prove a bound of the form 
\begin{equation}\label{goal.SLLN}
\mathbb E \big[\mathbf 1_{\mathcal A_n} \cdot Y_n^+\big] = \mathcal O\big(\frac 1{(\log n)^\delta}\big), 
\end{equation}
where $Y_n^+ = \mathbf 1\big\{ |U_n^+-\mathbb E[U_n^+]|> (\log n)^{\frac 9{10}}\big\}$. 
To this end, fix some $L\ge 0$ and define 
$$V_n^L = \sum_{j=0}^{\xi_n^r} \sum_{i=0}^{\tau_L} d_i\cdot g(X_j,\widetilde X_i), \quad {\rm and}\quad W_n^L =  \sum_{j=0}^{\xi_n^r} \sum_{i=\tau_L}^\infty d_i\cdot g(X_j,\widetilde X_i),$$
where 
$$\tau_L= \inf \{k\ge 0 : \widetilde X_k\in \partial B(0,2^L)\}.$$ 
Write also $Y_n^1 = \mathbf 1\{|V_n^L - \mathbb E[V_n^L]|> \tfrac 12 (\log n)^{\frac 9{10}}\}$ and $Y_n^2 = \mathbf 1\{|W_n^L - \mathbb E[W_n^L]|> \tfrac 12 (\log n)^{\frac 9{10}}\}$, so that 
$$\mathbb E \big[\mathbf 1_{\mathcal A_n} \cdot Y_n^+\big]  \le \mathbb E \big[\mathbf 1_{\mathcal A_n} \cdot Y_n^1\big]  + \mathbb E \big[\mathbf 1_{\mathcal A_n} \cdot Y_n^2\big].$$ 
Now a similar proof as the one of Proposition~\ref{prop.concentration} can show that 
$${\rm Var}(V_n^L) \lesssim L, \quad {\rm and} \quad {\rm Var}(W_n^L) \lesssim \log n,$$ 
and thus $\mathbb E[Y_n^1] \lesssim L/(\log n)^{\frac{18}{10}}$, while $\mathbb E[Y_n^2] \lesssim (\log n)^{-4/5}$. 
Therefore a similar argument as in the original proof can show that if $L= \sqrt{\log n}$, then 
$$\mathbb E \big[\mathbf 1_{\mathcal A_n} \cdot Y_n^2\big] \le \mathbb E[\mathbf 1\big\{\mathcal F_-^0[0,\tau_{\sqrt{\log n}}] \cap \mathcal R[-\xi_n^l,0]=\emptyset\big\} \cdot Y_n^2\big] \lesssim (\log n)^{-\frac {21}{20}},$$
and on the other hand, one can simply write 
$$ \mathbb E \big[\mathbf 1_{\mathcal A_n} \cdot Y_n^1\big] \le \mathbb P(\mathcal A_n)^{1/2}\cdot  \mathbb E \big[Y_n^1\big]^{1/2} \lesssim (\log n)^{-\frac{23}{20}},$$
which give~\eqref{goal.SLLN} as wanted. Actually the last step is to show that one also has 
$$\mathbb E[U_n] =  \frac{27\sigma^2}{2\pi^3} \cdot \log n + \mathcal O(\sqrt{\log n}),$$
but this is more routine (yet slightly tedious) computation. For this one can follow the same argument as the one given in the next section, and use a finer asymptotic of the function $G$, itself following from finer asymptotic of the function $g$, which is well-known, see e.g.~\cite[Theorem 4.3.1]{LL10}. 
}
\end{rem}


\subsection{Proof of Proposition~\ref{prop.concentration}} \label{sec.proof.lem} 
We start by a preliminary result. Let 
$$G_n= \sum_{-\xi_n^l \le j\le \xi_n^r} G(X_j).$$ 
\begin{lem}\label{lem.Gn.asymp}
One has 
$$\mathbb E[G_n] \sim \frac{27}{\pi^3}\cdot \log n, \quad {\rm and}\quad {\rm Var}(G_n) \lesssim \log n. $$ 
\end{lem}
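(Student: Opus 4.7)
The plan is to compute the leading asymptotic of $\mathbb{E}[G_n]$ directly using Lemma~\ref{lem.G} and the local central limit theorem (LCLT), and to bound $\mathrm{Var}(G_n)$ by splitting the sum into past and future contributions and carrying out a covariance analysis.

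\textbf{Mean.} By independence of $\xi_n^l, \xi_n^r$ from the walk, and the time-reversal symmetry giving $\mathbb{E}[G(X_{-k})] = \mathbb{E}[G(X_k)]$, one has $\mathbb{E}[G_n] = G(0) + 2 \sum_{k \ge 1} (1-1/n)^k \, \mathbb{E}[G(X_k)]$. For $k$ large, LCLT in dimension $6$ gives $p_k(y) \sim (3/(\pi k))^3 \, e^{-3\|y\|^2/k}$, and Lemma~\ref{lem.G} gives $G(y) \sim 9/(\pi^3 \|y\|^2)$, so $\mathbb{E}[G(X_k)] \sim \int_{\mathbb{R}^6} G(y) \, p_k(y) \, dy$; passing to polar coordinates in $\mathbb{R}^6$ (with $\mathrm{vol}(S^5) = \pi^3$) and evaluating the radial integral $\int_0^\infty r^3 e^{-3r^2/k} \, dr$ under the substitution $r = \sqrt{k/3}\,u$ yields $\mathbb{E}[G(X_k)] \sim 27/(2\pi^3 \, k)$. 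Combined with $\sum_{k \ge 1} (1-1/n)^k/k = \log n + O(1)$, this gives $\mathbb{E}[G_n] \sim (27/\pi^3) \log n$.

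\textbf{Variance.} Write $G_n = G(0) + G_n^+ + G_n^-$ with $G_n^+ = \sum_{k=1}^{\xi_n^r} G(X_k)$ and $G_n^- = \sum_{k=1}^{\xi_n^l} G(X_{-k})$. Since the forward walk $(X_k)_{k \ge 0}$ and backward walk $(X_{-k})_{k \ge 0}$ are independent, and $\xi_n^l, \xi_n^r$ are also independent, one has $\mathrm{Var}(G_n) = 2 \mathrm{Var}(G_n^+)$, and the goal becomes $\mathrm{Var}(G_n^+) \lesssim \log n$. Expanding as a double sum over $1 \le k \le l$, the diagonal part $\sum_k (1-1/n)^k \, \mathbb{E}[G(X_k)^2]$ is $O(1)$ because LCLT gives $\mathbb{E}[G(X_k)^2] \lesssim 1/k^2$ and $\sum 1/k^2 < \infty$. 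For the off-diagonal terms, set $m = l - k$ and $H_m = p_m * G$, and use the Markov property at time $k$ together with $\mathbb{E}[H_m(X_k)] = \mathbb{E}[G(X_l)]$ to rewrite $\mathrm{Cov}(G(X_k), G(X_l)) = \mathrm{Cov}(G(X_k), H_m(X_k))$. The two regime-dependent estimates I will establish are: for $\|x\| \lesssim \sqrt{m}$, Lemma~\ref{lem.G.grad} combined with LCLT yields $|H_m(x) - H_m(0)| \lesssim \|x\| \cdot H_m(0)/\sqrt{m}$; for $\|x\| \gtrsim \sqrt{m}$, a Taylor expansion using $\Delta G = -g$ gives $H_m(x) = G(x) + O(m \cdot g(x))$. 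Combining these with LCLT-based moments of $G(X_k)$ and splitting into $m \lesssim k$ versus $m \gtrsim k$ produces bounds of the form $|\mathrm{Cov}(G(X_k), G(X_l))| \lesssim 1/k^2 + m/k^3$ and $|\mathrm{Cov}(G(X_k), G(X_l))| \lesssim 1/(\sqrt{k}\,m^{3/2}) + 1/m^2$ respectively; in each case the summation over $(k,l)$ with $k < l \le n$ contributes $O(\log n)$, as required.

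\textbf{Main obstacle.} The naive bounds $\mathbb{E}[G(X_k) G(X_l)] \lesssim 1/(k(l-k))$ and $\mathbb{E}[G(X_k)] \, \mathbb{E}[G(X_l)] \lesssim 1/(kl)$ each sum separately to order $(\log n)^2$; recovering the correct $\log n$ order for the variance therefore requires capturing the cancellation between these two quantities, which is precisely what the two regime-dependent refinements of $H_m$ above are designed to produce. This is the technically delicate step, and the rest of the argument (diagonal bound, reduction to $G_n^+$, treatment of the geometric weights) is routine.
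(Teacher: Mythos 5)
Your proposal is correct, and for the variance it takes a genuinely different route from the paper. For the mean, the two arguments are essentially the same computation dressed differently: the paper converts $\sum_j\mathbb E[G(X_j)]$ into the lattice sum $\sum_u G(u)g(u)$ and integrates, while you apply the LCLT to get $\mathbb E[G(X_k)]\sim 27/(2\pi^3 k)$ term by term; both yield $\tfrac{27}{\pi^3}\log n$. For the variance, the paper explicitly remarks that Lawler's direct four-dimensional computation is unavailable because $G$ is not harmonic in $d=6$, and instead decomposes the trajectory into crossings of dyadic annuli $B(0,2^\ell)$: each crossing contributes a variable $Y_\ell$ with second moment bounded uniformly in the entry point, so the diagonal gives $O(\log n)$, and cross terms factorize up to $1+\mathcal O(2^{\ell-m})$ via the Harnack-type estimate $H_m(y,x)=H_m(0,x)(1+\mathcal O(2^{\ell-m}))$ for harmonic measure. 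You instead work in the time variable and rescue the direct computation by quantifying the failure of harmonicity: writing $H_m=p_m*G$ and using $\Delta G=-g$, the error $H_m(x)-G(x)$ is $\mathcal O(mg(x))$ for $\|x\|\gtrsim\sqrt m$, and $g\asymp\|x\|^{-4}$ decays fast enough for the resulting covariance bounds to sum to $O(\log n)$; the complementary regime is handled by the gradient estimate of Lemma~\ref{lem.G.grad}. Your stated bounds $1/k^2+m/k^3$ (for $m\lesssim k$) and $k^{-1/2}m^{-3/2}+m^{-2}$ (for $m\gtrsim k$) do indeed sum to $O(\log n)$, and you correctly identify that the whole point is the cancellation between $\mathbb E[G(X_k)G(X_l)]$ and $\mathbb E[G(X_k)]\mathbb E[G(X_l)]$, which the naive bounds miss. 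The paper's route buys robustness (only Green function and harmonic measure estimates, no LCLT), at the price of the somewhat heavier bookkeeping with the stopping times $\tau_\ell$ and the cut-offs $L,M$ near scale $\sqrt n$; your route is more computational but more quantitative, and would adapt more directly to extracting second-order terms. The only places requiring care in your write-up are the atypical events $\|X_k\|\not\asymp\sqrt k$ when gluing the two regime-dependent estimates for $H_m$ (Gaussian tails handle these), and the randomization by $\xi_n^r$, which is cleanest via conditioning on $\xi_n^r$ and using $\mathbb E[\log(\xi_n^r\vee 2)]\lesssim\log n$ together with ${\rm Var}(\log(\xi_n^r\vee 2))=O(1)$; neither is a gap.
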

\begin{proof}
Let $\lambda = (1-1/n)$. One has by~\eqref{eq.g} and Lemma~\ref{lem.G},  
\begin{align*}
\mathbb E[G_n] & = G(0) + 2\sum_{j=1}^\infty \lambda^j \cdot \mathbb E[G(X_j)]  \sim  2\sum_{j=1}^{n} \mathbb E[G(X_j)]  \sim 2\sum_{\|u\|\le \sqrt n} G(u) g(u) \sim 2c_6 a_6 \pi^3 \int_1^{\sqrt n} \frac 1r \, dr \\
&= c_6 a_6 \pi^3 \log n = 27 \pi^{-3} \log n, 
\end{align*}
and we now deal with the variance. For this, unfortunately it does not seem possible to use an explicit computation as it was done in dimension four by Lawler, see the proof of~\cite[Proposition 3.4.1]{L91}, 
since the 
function $G$ is no longer harmonic when $d=6$. However, the heuristic argument given there still holds, and we shall use it here. More precisely, the idea is that parts of the trajectories of $X$ between times $2^i$ and $2^{i+1}$ are almost independent for different $i$'s. In order to formalize it, we introduce some more notation. First notice that by symmetry it suffices to bound the variance of $G_n^+ = \sum_{k=0}^{\xi_n} G(X_k)$, 
where $\xi_n$ is a Geometric random variable with parameter $1/n$, independent of the walk $X$. 
Then for $\ell \ge 0$, define $B_\ell = B(0,2^{\ell-1})$, and 
$$\tau_\ell = \inf\{k\ge 0 : X_k\in \partial B_\ell\}.$$ 
Let also 
$$Y_\ell^{(n)} = \sum_{\tau_\ell \wedge \xi_n \le k < \tau_{\ell + 1} \wedge \xi_n} G(X_k), \quad {\rm and}\quad Y_\ell = \sum_{\tau_\ell \le k < \tau_{\ell + 1} } G(X_k),$$ 
so that in particular, 
$$G_n^+ = \sum_{\ell =0}^\infty Y_\ell^{(n)}. $$ 
Recall that for a simple random walk, starting from $\partial B_\ell$, the probability to hit $B(0,r)$, for some $r<2^\ell$, is of order $g(2^\ell) / g(r)$, where we use the convention $g(r)=r^{2-d}$. It follows that 
 \begin{equation}\label{Yell.square.bound}
\sup_{\ell \ge 0} \sup_{x\in \partial B_\ell} \mathbb E_x[Y_\ell^2] \lesssim \sup_{\ell \ge 0} \sum_{\|u\|,\|v\| \le 2^\ell} G(u) G(u+v) g(2^\ell)^2  \lesssim 1. 
\end{equation}
Thus for any $\ell \ge 0$, by the Markov property, 
$$
\mathbb E[Y_\ell^{(n)}] \le \mathbb P(\xi_n \ge \tau_\ell)\cdot \sup_{x\in \partial B_\ell} \mathbb E_x[Y_\ell] \lesssim \mathbb P(\xi_n \ge \tau_\ell),
$$
 and likewise 
 \begin{equation}\label{Yell.square}
 \mathbb E[(Y_\ell^{(n)})^2] \le \mathbb P(\xi_n \ge \tau_\ell) \cdot \sup_{x\in \partial B_\ell} \mathbb E_x[Y_\ell^2]    \lesssim \mathbb P(\xi_n \ge \tau_\ell). 
 \end{equation}
Moreover, 
\begin{equation}\label{varGn}
{\rm Var}(G_n^+) =  \sum_{\ell \ge 0} {\rm Var}(Y_\ell^{(n)})  + 2\sum_{0\le \ell < m} \Big(\mathbb E[Y_\ell^{(n)}\cdot Y_m^{(n)}] -  \mathbb E[Y_\ell^{(n)}]\cdot\mathbb E[Y_m^{(n)}] \Big).
\end{equation}
The first sum above is handled using~\eqref{Yell.square}, which shows that it is bounded by 
$$\sum_{\ell \ge 0} \mathbb E[(Y_\ell^{(n)})^2] \lesssim \sum_{\ell \ge 0} \mathbb P(\xi_n\ge \tau_\ell) \lesssim \log n. $$ 
It amounts now to bound the second sum in~\eqref{varGn}. Define for $y\in B_m$ and $x\in \partial B_m$, 
$$H_m(y,x) = \mathbb P_y(X_{\tau_m} = x). $$ 
It is known, see e.g.~\cite[Proposition 6.4.4]{LL10}, that uniformly in $y \in B_\ell$, with $\ell \le m-1$,  
\begin{equation}\label{Hm.bound}
H_m(y,x) = H_m(0,x) (1+\mathcal O(2^{\ell - m})). 
\end{equation}
Now define 
$$L = \frac 12 \log_2(n)- 2\log_2(\log n), \quad {\rm and} \quad M = \frac 12 \log_2( n) - \log_2(\log n). $$ 
Write for $\ell,m\ge 0$, 
$$\Delta_{\ell,m}^{(n)} = \mathbb E[Y_\ell^{(n)}\cdot Y_m^{(n)}] -  \mathbb E[Y_\ell^{(n)}]\cdot\mathbb E[Y_m^{(n)}] . $$ 
We first bound for $\ell \le L$ and $m\ge M$, using the memoryless property of geometric random variables, 
\begin{align*}
\mathbb E[Y_\ell^{(n)}\cdot Y_m^{(n)}]  & \le \sum_{y\in \partial B_{\ell+1}} \sum_{x\in \partial B_M} 
\mathbb E\left[Y_\ell^{(n)} \mathbf 1\{X_{\tau_{\ell + 1}} = y\}\right]
\cdot H_M(y,x) \cdot \mathbb E_x[Y_m^{(n)}]  \\
& \le \mathbb E[Y_\ell^{(n)}]\cdot \sum_{x\in \partial B_M} H_M(0,x) (1+\mathcal O(2^{L-M}))\cdot \mathbb E_x[Y_m^{(n)}],
\end{align*} 
and for $m\ge M$, 
\begin{align*}
 \mathbb E[Y^{(n)}_m] & = \sum_{x\in \partial B_M} \mathbb P(\xi_n > \tau_M, X_{\tau_M} =x) \cdot \mathbb E_x[Y^{(n)}_m] \\
 & = \sum_{x\in \partial B_M}H_M(0,x) \cdot  \mathbb E_x[Y^{(n)}_m] - \sum_{x\in \partial B_M} \mathbb P(\xi_n \le \tau_M, X_{\tau_M} =x) \cdot \mathbb E_x[Y^{(n)}_m] \\
 & \ge  \sum_{x\in \partial B_M}H_M(0,x) \cdot  \mathbb E_x[Y^{(n)}_m] -  \mathbb P(\xi_n \le \tau_M) \cdot \sup_{x\in \partial B_M} \mathbb P_x[\xi_n \le \tau_m]. 
 \end{align*}
 Note also that 
 $$\mathbb P(\xi_n\le \tau_M) =\sum_{k\ge 0} \mathbb P(\xi_n = k) \cdot \mathbb P(\tau_M\ge k) \le \frac{\mathbb E[\tau_M]}{n} \lesssim \frac {2^{2M}}{n} \lesssim \frac 1{(\log n)^2}. $$
Altogether, this gives 
\begin{align*}
\sum_{\ell =0}^{L} \sum_{m\ge N}\Delta_{\ell,m}^{(n)} \lesssim \log n. 
\end{align*} 
Moreover, 
$$\sum_{L\le \ell < m} \mathbb E[Y_\ell^{(n)}\cdot Y_m^{(n)}]  \le \sum_{L\le \ell < m} \mathbb P(\xi_n \ge \tau_\ell) \cdot \sup_{x\in \partial B_{\ell+1}} \mathbb P_x(\xi_n\ge \tau_m) 
\lesssim (\log \log n)^2, $$
and it just remains to consider the case when $\ell < m \le M$. Note that the case $m=\ell +1$ can be handled using a similar bound as~\eqref{Yell.square}. 
Furthermore, if $\ell +2 \le m$, then by \eqref{Hm.bound},
\begin{align*}
\mathbb E[Y_\ell^{(n)}\cdot Y_m^{(n)}] & \le \mathbb E[Y_\ell\cdot Y_m] = \sum_{y\in \partial B_{\ell +1}} \sum_{x\in \partial B_m} \mathbb E[Y_\ell \cdot \mathbf 1\{X_{\tau_{\ell +1}} = y\}] \cdot H_m(y,x)\cdot \mathbb E_x[Y_m]  \\
& \le (1+ \mathcal O(2^{\ell - m}))\cdot \mathbb E[Y_\ell] \cdot \mathbb E[Y_m]. 
\end{align*}
Conversely, one can use that by Cauchy-Schwarz inequality,  
$$\mathbb E[Y_\ell^{(n)}] \ge \mathbb E[Y_\ell] - \mathbb E[Y_\ell^2]^{1/2} \cdot \mathbb P(\xi_n \le \tau_{\ell +1})^{1/2}\stackrel{\eqref{Yell.square.bound}}{\ge} 
 \mathbb E[Y_\ell] - C\cdot \mathbb P(\xi_n \le \tau_{\ell +1})^{1/2},  
$$ 
for some constant $C>0$, and that for $\ell \le M$, 
$$\mathbb P(\xi_n \le \tau_{\ell +1}) \le \frac{\mathbb E[\tau_{\ell +1}]}{n} \lesssim \frac {2^{2M}}{n} \lesssim \frac 1{(\log n)^2}, $$
which altogether give as well 
$$\sum_{0 \le \ell < m\le N}  \Delta_{\ell,m}^{(n)} \lesssim \log n. $$ 
This concludes the proof of the upper bound for the variance. 
\end{proof}

We now move to proving concentration for $U_n$. The proof is based on a similar idea.  
\begin{proof}[Proof of Proposition~\ref{prop.concentration}] 
First recall, see e.g.~\cite{ASS23}, that for each $i\ge 1$, and $k\ge 0$, one has $\mathbb P(d_i=k)=\sum_{j\ge k+1} \mu(j)$, and hence 
$$\mathbb E[d_i] =  \sum_{k\ge 1} (k-1) \sum_{j\ge k}\mu(j) = \sum_{j\ge 1} \frac{j(j-1)}{2}\mu(j) = \frac{\sigma^2}{2},$$
while $\mathbb E[d_0] =  1$. Thus, recalling~\eqref{def.Un} and the definition of $G$, we get
\begin{equation}\label{cond.exp.Un}
\mathbb E[U_n\mid X,\xi_n^r,\xi_n^l] = \frac{\sigma^2}{2}\cdot  G_n + (1-\frac{\sigma^2}{2}) g_n, 
\end{equation}
with 
$$g_n = \sum_{j=-\xi_n^l}^{\xi_n^r} g(X_j). $$ 
Therefore the result for the expectation of $U_n$ follows from Lemma~\ref{lem.Gn.asymp} together with the fact that $\mathbb E[g_n] \le 2G(0)$. 
We shall now use that 
\begin{equation}\label{var.cond.formula}
{\rm Var}(U_n) = \mathbb E\Big[{\rm Var}(U_n\mid X,\xi_n^l,\xi_n^r)\Big] + {\rm Var}\Big(\mathbb E[U_n \mid X,\xi_n^l,\xi_n^r]\Big). 
\end{equation}
Lemma~\ref{lem.Gn.asymp} and~\eqref{cond.exp.Un} yield
$$ {\rm Var}\Big(\mathbb E[U_n \mid X,\xi_n^l,\xi_n^r]\Big)\lesssim {\rm Var}(G_n) +  \mathbb E[g_n^2] \lesssim \log n + \mathbb E[g_n^2].$$
Furthermore, 
\begin{align*}
\mathbb E[g_n^2] & \le 4\mathbb E\Big[\Big(\sum_{j\ge 0} g(X_j)\Big)^2\Big] \le 8\sum_{0\le j\le k}\mathbb E[g(X_j)g(X_k)]  =8 \sum_{u,v\in \mathbb Z^d} g(u)^2g(u+v)g(v)   \lesssim 1,
 \end{align*}
and thus it only remains to consider the first term on the right-hand side of~\eqref{var.cond.formula}. This is where we use the hypothesis that $\mu$ has a finite third moment, 
which implies that $d_i$ has a finite second moment for all $i\ge 0$.

Now for $\ell \ge 0$, define 
$$\widetilde \tau_\ell = \inf\{k\ge 0 : \widetilde X_k \in \partial B_\ell\}, $$ 
and for $x\in \mathbb Z^6$, 
$$\widetilde Y_\ell(x) = \sum_{j=\widetilde \tau_\ell}^{\widetilde \tau_{\ell +1}-1 } d_j \cdot g(x,\widetilde X_j). $$ 
Note that by using~\eqref{Hm.bound}, one has for any $m\ge \ell +2$, uniformly over $x,y\in \mathbb Z^6$,  
$$\mathbb E[\widetilde Y_\ell(x)\cdot \widetilde Y_m(y)]\le (1+\mathcal O(2^{\ell - m})) \cdot \mathbb E[\widetilde Y_\ell(x)]\cdot \mathbb E[\widetilde Y_m(y)]. $$ 
Moreover, repeating the argument used for~\eqref{Yell.square.bound} yields for $m\ge 0$,  
$$\mathbb E[\widetilde Y_m(y)] \lesssim \sum_{u\in B_{m+1}} g(y,u) g(2^m) \lesssim \rho_m(y), $$ 
with  
$$\rho_m(y) = g(y) 2^{2m} \cdot \mathbf 1\{y\notin B_m\} + G(2^m)\cdot \mathbf 1\{y\in B_m\}.$$ 
As a consequence, letting 
$$h_\ell(y) = \frac{2^\ell}{(1+\|y\|)^3}\cdot \mathbf 1\{y\notin B_\ell\} + 2^{-2\ell} \cdot\mathbf 1\{y\in B_\ell\},$$
we get that for any $\ell \ge 0$, 
$$\sum_{m\ge \ell +1} 2^{\ell-m}\cdot  \mathbb E[\widetilde Y_m(y)] \lesssim h_\ell(y). $$ 
It follows that uniformly over $x,y\in \mathbb Z^6$, 
$$\sum_{\ell \ge 0}\sum_{m\ge \ell +2} \Big(\mathbb E[\widetilde Y_\ell(x)\cdot \widetilde Y_m(y)]-\mathbb E[\widetilde Y_\ell(x)]\cdot \mathbb E[\widetilde Y_m(y)] \Big) 
\lesssim \sum_{\ell \ge 0} \rho_\ell(x)\cdot h_\ell (y)\le \sum_{\ell \ge 0} h_\ell(x)\cdot h_\ell (y). $$ 
On the other hand, a similar computation as above yields 
$$\mathbb E\big[\widetilde Y_\ell(x)\cdot \big(\widetilde Y_\ell(y)+\widetilde Y_{\ell+1}(y)\big) \big]\lesssim \sum_{u,v\in B_{\ell+1}} g(x,u) g(y,u+v) g(2^\ell)^2 \lesssim \rho_\ell(x) \cdot \rho_\ell(y)
\le h_\ell(x) \cdot h_\ell(y).   
$$
Altogether this gives 
$$\sum_{\ell \ge 0}\sum_{m\ge 0} \Big(\mathbb E[\widetilde Y_\ell(x)\cdot \widetilde Y_m(y)]-\mathbb E[\widetilde Y_\ell(x)]\cdot \mathbb E[\widetilde Y_m(y)] \Big) 
\lesssim \sum_{\ell \ge 0} h_\ell(x) \cdot h_\ell(y). $$ 
From this we infer that 
$${\rm Var}(U_n\mid X,\xi_n^\ell, \xi_n^r) \lesssim \sum_{-\xi_n^\ell\le j,k\le \xi_n^r} \sum_{\ell \ge 0} h_\ell(X_j)\cdot h_\ell (X_k). $$  
Now, for any fixed $\ell \ge 0$, one has 
$$\mathbb E\left[\sum_{0\le j\le k\le \xi_n^r} h_\ell(X_j) h_\ell(X_k) \right]\lesssim \sum_{\|u\|,\|v\|\le \sqrt n} h_\ell(u) \cdot h_\ell(u+v)\cdot g(u)g(v)\lesssim \frac {n^2}{n^2+2^{4\ell}}, $$
and likewise,  
$$ \mathbb E\Big[\sum_{0\le j\le \xi_n^r} h_\ell(X_j) \Big]^2 \lesssim \left(\sum_{\|u\|\le \sqrt n} h_\ell (u) g(u)\right)^2\lesssim \frac {n^2}{n^2+2^{4\ell}}. $$ 
Thus as wanted, 
$$\mathbb E\Big[ {\rm Var}(U_n\mid X,\xi_n^\ell, \xi_n^r)\Big]\lesssim \sum_{\ell \ge 0} \frac {n^2}{n^2+2^{4\ell}} \lesssim \log n, $$ 
concluding the proof of the proposition. 
\end{proof}


\section{Proofs of Theorem~\ref{m7} and~Proposition \ref{m5}}\label{sec.57}

\begin{proof}[Proof of Proposition~\ref{m5}]
The proof is the same as in~\cite{ASS18}, which we recall for completeness. For the lower bound, we let 
$$L_n(x) = \sum_{k=0}^n \mathbf 1\{S_k = x\}, $$ 
and $\nu_n(x) = \tfrac{L_n(x)}{n+1}$, which defines a probability measure supported on $\mathcal R_n$. Thus one can use Theorem~\ref{thm.ASS2}, which gives that 
$$\bca(\mathcal R_n) \gtrsim \frac {n^2}{\sum_{x,y\in \mathcal R_n} G(x-y) L_n(x) L_n(y)}. $$
Then by using Cauchy-Schwarz's inequality, we get 
\begin{align}\label{lower.bound}
\nonumber \mathbb E[\bca(\mathcal R_n)]  & \gtrsim \frac {n^2}{\sum_{x,y\in \mathcal R_n} G(x-y) \mathbb E[L_n(x) L_n(y)]}\ge \frac {n^2}{\mathbb E[\sum_{0\le k\le n} \sum_{0\le \ell \le n} G(X_k-X_\ell)] }\\
&\gtrsim \frac {n}{\mathbb E[\sum_{0\le k\le n} G(X_k)] }\gtrsim \frac {n}{\sum_{\|u\|\le \sqrt n} G(u)g(u) }\gtrsim \sqrt n. 
\end{align}
The upper bound comes from the fact that the branching capacity is monotone for inclusion, and thus if $R_n = \max_{0\le k\le n} \| X_k\|$, then 
$\bca(\mathcal R_n) \le \bca(B(0,R_n)) \lesssim R_n$, as we know from~\cite{Zhu16} that the branching capacity of a ball of radius $R$ is of order $R$ in dimension $5$. 
Therefore $\mathbb E[\bca(\mathcal R_n)] \lesssim \mathbb E[R_n]$, and the desired upper bound follows since it is well known that $\mathbb E[R_n] \lesssim \sqrt n$. 
\end{proof}

\begin{proof}[Proof of Theorem~\ref{m7}]
The fact that the limit exists in~\eqref{LLN} follows from the ergodic theorem, exactly as in~\cite{JO69}. Let us recall the argument for reader's convenience.  
First one has 
$$\bca(\mathcal R_n) = \sum_{k=0}^n e_{\mathcal R_n}(X_k) \cdot \mathbf 1\big\{X_k\notin \{X_{k+1},\dots, X_n\}\big\}.$$
Thus, letting $\mathcal R_\infty$ and $\widetilde{\mathcal R}_\infty$ be two independent infinite ranges starting from the origin, one has 
$$\frac{\bca(\mathcal R_n)}{n} \ge \frac 1n \sum_{k=0}^n e_{\mathcal R_\infty\cup \widetilde{\mathcal R}_\infty }(X_k) \cdot \mathbf 1\big\{X_k\notin \{X_{k+1},\dots, \}\big\}, $$
and the ergodic theorem implies that the right hand side converges almost surely as $n\to \infty$, toward (with the notation of Corollary~\ref{cor.Lawler}) 
$$c_d = \mathbb E\Big[e_{\mathcal R_\infty\cup \widetilde{\mathcal R}_\infty}(0) \cdot \mathbf 1\big\{0\notin \mathcal R[1,\infty)\} \Big]= \mathbb E\Big[\mathbf 1_{\mathcal A_\infty}\cdot e_\infty\Big],$$
which provides already the lower bound 
$$\liminf_{n\to \infty}  \frac{\bca(\mathcal R_n)}{n}  \ge c_d.$$
To get the upper bound, notice that for any $n\ge 1$, 
$$\mathbb E[\bca(\mathcal R_n)] \le 2 \sqrt n + (n-2\sqrt n)\cdot \mathbb E\Big[e_{\mathcal R[0,\sqrt n]\cup \widetilde{\mathcal R}[0,\sqrt n]}\cdot \mathbf 1\{0\notin \mathcal R[0,\sqrt n]\}\Big],$$ 
and since by monotone convergence the expectation on the right hand side converges to $c_d$ as $n\to \infty$, it follows that 
$$\limsup_{n\to \infty} \frac{\mathbb E[\bca(\mathcal R_n)]}{n} \le c_d. $$  
Now fix some integer $m\ge 1$, and observe that by subadditivity of the branching capacity, see~\cite{Zhu16}, one has 
$$\bca(\mathcal R_n) \le \sum_{i = 0}^{\lfloor n/m\rfloor-1} \bca(\mathcal R[im,(i+1)m]).$$
Since the right-hand side is a sum of independent and identically distributed terms, one get by Kolmogorov's strong law of large numbers,  
$$\limsup_{n\to \infty}  \frac{\bca(\mathcal R_n)}{n} \le \frac{\mathbb E[\bca(\mathcal R_m)]}{m}. $$
Since this holds for any $m$, we obtain the converse inequality, 
$$\limsup_{n\to \infty}  \frac{\bca(\mathcal R_n)}{n}  \le \limsup_{m\to \infty} \frac {\mathbb E[\bca(\mathcal R_m)]}{m} \le c_d. $$ 
Finally, to see that $c_d$ is positive when $d\ge 7$, 
one can use the second statement of Corollary~\ref{cor.Lawler}. It has already been seen in its proof that $\mathbb E[U_\infty]$ is finite, which implies that $U_\infty$ is finite almost surely. Together with the second claim of Corollary~\ref{cor.Lawler}, we deduce that $\mathbf 1_{\mathcal A_\infty}\cdot e_\infty$ is not almost surely equal to zero, and thus $c_d>0$. 
\end{proof}

\end{document}